\title{On the quasi-derivation relation \endgraf for multiple zeta values}
\author{Tatsushi Tanaka}
\address{Graduate School of Mathematics, Kyushu University \endgraf
Fukuoka, 812-8581, Japan}
\email{t.tanaka@math.kyushu-u.ac.jp}
\theoremstyle{definition}
\newtheorem{thm}{Theorem}[section]
\newtheorem{defn}[thm]{Definition}
\newtheorem{rem}[thm]{Remark}
\newtheorem{cor}[thm]{Corollary}
\newtheorem{lem}[thm]{Lemma}
\newtheorem{prop}[thm]{Proposition}
\newtheorem{exmp}[thm]{Example}
\newtheorem{fact}[thm]{Fact}
\newtheorem{Keyprop}[thm]{Key Proposition}
\DeclareFontFamily{OT2}{cmr}{\hyphenchar\font45 }
\DeclareFontShape{OT2}{cmr}{m}{n}{%
   <5><6><7><8><9>gen*wncyr%
   <10><10.95><12><14.4><17.28><20.74><24.88>wncyr10}{}
\DeclareFontShape{OT2}{cmr}{b}{n}{%
   <5><6><7><8><9>gen*wncyb%
   <10><10.95><12><14.4><17.28><20.74><24.88>wncyb10}{}
\DeclareMathAlphabet{\mathcyr}{OT2}{cmr}{m}{n}
\DeclareMathAlphabet{\mathcyb}{OT2}{cmr}{b}{n}
\SetMathAlphabet{\mathcyr}{bold}{OT2}{cmr}{b}{n}
\begin{document}

\maketitle

\begin{abstract}
Recently, Masanobu Kaneko introduced a conjecture on an extension of the derivation relation for multiple zeta values. The goal of the present paper is to present a proof of this conjecture by reducing it to a class of relations for multiple zeta values studied by Kawashima. In addition, some algebraic aspects of the quasi-derivation operator $\partial_n^{(c)}$ on $\mathbb{Q}\langle x,y \rangle$, which was defined by modeling a Hopf algebra developed by Connes and Moscovici, will be presented. 
\end{abstract}

\tableofcontents

\section{Introduction/Main Theorem}\label{sec1}
\noindent Let $n \ge 1$ be an integer. For each index set $(k_1,k_2,\ldots ,k_n)$ of positive integers with $k_1>1$, the multiple zeta value (MZV for short) is a real number defined by the convergent series 
$$ \displaystyle \zeta(k_1,k_2,\ldots ,k_n)=\sum_{m_1>m_2>\cdots >m_n>0}\frac{1}{m_1^{k_1}m_2^{k_2}\cdots m_n^{k_n}}. $$
We call the number $k_1+\cdots +k_n$ its weight and $n$ its depth. 

Throughout the present paper, we employ the algebraic setup introduced by Hoffman \cite{H} to study the quasi-derivation relation for MZV's. Let $\mathfrak{H}=\mathbb{Q}\langle x,y \rangle $ denote the non-commutative polynomial algebra over the rational numbers in two indeterminates $x$ and $y$, and let $\mathfrak{H}^1$ and $\mathfrak{H}^0$ denote the subalgebras $\mathbb{Q}+\mathfrak{H}y$ and $\mathbb{Q}+x \mathfrak{H}y$, respectively. The $\mathbb{Q}$-linear map $\mathit{Z}:\mathfrak{H}^0 \to \mathbb{R}$ is defined by $\mathit{Z}(1)=0$ and 
$$ \mathit{Z}(x^{k_1-1}yx^{k_2-1}y\cdots x^{k_n-1}y)=\zeta(k_1,k_2,\ldots ,k_n). $$
The degree (resp. degree with respect to $y$) of a word is the weight (resp. the depth) of the corresponding MZV. 

In Zagier's paper \cite{Z}, it is conjectured that the dimension of the $\mathbb{Q}$-vector space generated by MZV's of weight $k$ is $d_k$, the numbers determined by the recursion $d_0=1$, $d_1=0$, $d_2=1$, and $d_k=d_{k-2}+d_{k-3}$ for $k \ge 3$. Goncharov \cite{G} and Terasoma \cite{T} proved that the number $d_k$ gives an upper bound of the dimension of the space generated by MZV's of weight $k$. The number $d_k$ is far smaller than the total number $2^{k-2}$ of indices of weight $k$, hence there should be several relations among MZV's. In the present setup, finding a linear relation among MZV's corresponds to finding an element in $\mathrm{ker}\mathit{Z}\subset\mathfrak{H}^0$. 

Before stating the main theorem, the derivation relation for MZV's, which appeared in Ihara-Kaneko-Zagier \cite{IKZ}, is introduced. A derivation $\partial$ on $\mathfrak{H}$ is a $\mathbb{Q}$-linear endomorphism of $\mathfrak{H}$ satisfying the Leibnitz rule $\partial(ww^{\prime})=\partial(w)w^{\prime}+w\partial(w^{\prime})$. Such a derivation is uniquely determined by its images of generators $x$ and $y$. Let $z=x+y$. For each $n\ge 1$, the derivation $\partial_n:\mathfrak{H}\to\mathfrak{H}$ is defined by $\partial_n(x)=xz^{n-1}y$ and $\partial_n(y)=-xz^{n-1}y$. It follows immediately that $\partial_n(\mathfrak{H})\subset\mathfrak{H}^0$. 

\begin{fact}[Derivation Relation, \cite{IKZ}]\label{thm1}
{\it For any $n\ge 1$, we have $\partial_n(\mathfrak{H}^0)\subset \mathrm{ker}\mathit{Z}$.}
\end{fact}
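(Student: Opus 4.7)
The statement is the derivation relation of Ihara--Kaneko--Zagier \cite{IKZ}, and the natural approach is to derive it from the regularized double shuffle relations for MZVs. A first observation is the identity $\partial_n(z)=0$ for $z=x+y$, since $\partial_n(x)+\partial_n(y)=xz^{n-1}y-xz^{n-1}y=0$. This makes $\partial_n$ well-behaved with respect to the commutative generator $z$ and will be the technical lever throughout the argument.

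I would first verify the subsidiary fact $\partial_n(\mathfrak{H}^0)\subset\mathfrak{H}^0$ by a direct Leibniz computation on a general element $xw'y\in x\mathfrak{H}y$, so that $Z\circ\partial_n$ is indeed defined on $\mathfrak{H}^0$. Then I would package the family $\{\partial_n\}_{n\ge 1}$ into a single generating derivation $D(u)=\sum_{n\ge 1}\partial_n\,u^{n-1}$ on $\mathfrak{H}[[u]]$, characterized by $D(u)(x)=x(1-uz)^{-1}y$ and $D(u)(y)=-x(1-uz)^{-1}y$. The desired conclusion $\partial_n(\mathfrak{H}^0)\subset\ker Z$ is then the single formal identity $Z(D(u)(w))=0$ in $\mathbb{R}[[u]]$ for every $w\in\mathfrak{H}^0$, which is far more tractable to attack in one stroke than the original $n$-by-$n$ family.

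The core of the argument is to rewrite $D(u)(w)$ as a $\mathbb{Q}[[u]]$-linear combination of differences of the form (harmonic product minus shuffle product) of pairs of elements of $\mathfrak{H}^0$. Since $Z$ is a $\mathbb{Q}$-algebra homomorphism for both the harmonic and the shuffle product on $\mathfrak{H}^0$, each such difference lies in $\ker Z$, and the conclusion follows. In the framework of the present paper this is most cleanly effected by exhibiting $D(u)(w)$ as a Kawashima element and invoking Kawashima's relation, paralleling the treatment to be used below for the broader quasi-derivation $\partial_n^{(c)}$.

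The main obstacle is the explicit combinatorial identity realizing $D(u)(w)$ in this prescribed double-shuffle or Kawashima form. Already for $n=1$ this contains Hoffman's relation, whose verification requires a careful term-by-term matching. For general $n$ the factor $z^{n-1}=(x+y)^{n-1}$ inside $\partial_n$ produces binomial-type coefficients when expanded, and applying the iterated Leibniz rule to a generic word $w=x^{k_1-1}y\cdots x^{k_r-1}y$ yields many terms whose signs and multiplicities must line up precisely with those arising on the Kawashima side. This matching is the principal combinatorial difficulty, and it is essentially what one must upgrade in order to handle the full quasi-derivation $\partial_n^{(c)}$ treated in the rest of the paper.
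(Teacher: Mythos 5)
Your proposal is a strategy outline rather than a proof: the step you yourself label ``the principal combinatorial difficulty'' --- actually exhibiting $D(u)(w)$ as a combination of double-shuffle or Kawashima elements --- is the entire content of the statement, and it is left undone. Moreover, in the form you state it the reduction cannot succeed: you propose to write $\partial_n(w)$ as a combination of differences (harmonic product minus shuffle product) of \emph{pairs of elements of $\mathfrak{H}^0$}, i.e.\ to deduce the derivation relation from the finite double shuffle relations. This already fails in weight $3$: $\partial_1(xy)=xy^2-x^2y$ gives $\zeta(2,1)=\zeta(3)$, whereas any product of two nontrivial admissible words has weight at least $4$, so the finite double shuffle relations are vacuous in weight $3$. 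What is needed is the \emph{regularized} double shuffle relation, with one factor a non-admissible word such as $y$ (this is exactly how Hoffman's relation, which you invoke, arises --- note $y\notin\mathfrak{H}^0$), or equivalently the $\varepsilon$-twisted harmonic product of Kawashima's relation $L_x\varepsilon(\mathfrak{H}y\ast\mathfrak{H}y)\subset\mathrm{ker}\mathit{Z}$ (Fact \ref{thm2}). So the framework of the reduction needs repair before the term-by-term matching could even begin.

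For comparison, the paper quotes this statement as a known Fact from \cite{IKZ} and does not reprove it in the main text; it is recovered as the $c=0$ case of Theorem \ref{mthm}, and Appendix 1 gives a short independent derivation from Kawashima's relation. The idea you are missing there is the correct packaging of the family $\{\partial_n\}$: not the linear generating series $D(u)=\sum_{n\ge1}\partial_n u^{n-1}$, but the automorphism $\Delta=\exp\bigl(\sum_{n\ge1}\partial_n/n\bigr)$ of the completed algebra, characterized by $\Delta(x)=x(1-y)^{-1}$ and $\Delta(z)=z$. One then verifies the single identity of automorphisms $\Phi=\varepsilon\Delta\varepsilon$ by checking it on the two generators, where $\Phi$ satisfies $\frac{1}{1+y}\ast w=\frac{1}{1+y}\Phi(w)$; this converts $\Delta-1$ into the harmonic multiplication operator $\mathcal{H}_{1/(1+y)}$ conjugated by $\varepsilon L_x$, so that $(\Delta-1)(\mathfrak{H}^0)\subset L_x\varepsilon(\mathfrak{H}y\ast\mathfrak{H}y)$, and the combinatorial matching you anticipate never has to be performed. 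Because both sides are automorphisms determined by their values on generators, the verification is a two-line computation, and the individual $\partial_n$ are then extracted from the homogeneous pieces of $\Delta-1$. If you want a proof in the spirit of your outline, this is the mechanism that replaces your missing identity.
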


The following extension of the operator $\partial_n$ is firstly defined by Kaneko \cite{K}. He modified the formula
$$ \displaystyle \partial_n=\frac{1}{(n-1)!}\mathrm{ad}(\theta)^{n-1}(\partial_1) $$
in \cite{IKZ}, where $\theta$ stands for the derivation on $\mathfrak{H}$ defined by $\theta(x)=\frac{1}{2}(xz+zx)$ and $\theta(y)=\frac{1}{2}(yz+zy)$, and $\mathrm{ad}(\theta)(\partial)=[\theta, \partial]:=\theta\partial -\partial\theta$. 

\begin{defn}\label{def1}
{\it Let $c$ be a rational number and $H$ the derivation on $\mathfrak{H}$ defined by $H(w)=\deg (w)w$ for any words $w \in \mathfrak{H}$. For each integer $n \ge 1$, the $\mathbb{Q}$-linear map $\partial_n^{(c)}:\mathfrak{H}\to\mathfrak{H}$ is defined by}
$$ \displaystyle \partial_n^{(c)}=\frac{1}{(n-1)!}\mathrm{ad}(\theta^{(c)})^{n-1}(\partial_1), $$
{\it where $\theta^{(c)}$ is the $\mathbb{Q}$-linear map defined by $\theta^{(c)}(x)=\theta (x)$, $\theta^{(c)}(y)=\theta (y)$ and the rule}
\begin{equation}
\theta^{(c)}(ww^{\prime})=\theta^{(c)}(w)w^{\prime}+w\theta^{(c)}(w^{\prime})+c\partial_1(w)H(w^{\prime}) \label{19}
\end{equation}
{\it for any $w,w^{\prime} \in \mathfrak{H}$.}
\end{defn}

If $c=0$, the quasi-derivation $\partial_n^{(c)}$ is reduced to the ordinary derivation $\partial_n$. If $c\neq 0$ and $n\ge 2$, the operator $\partial_n^{(c)}$ is no longer a derivation. Although the inclusion $\partial_n^{(c)}(\mathfrak{H})\subset\mathfrak{H}^0$ does not hold in general, we have $\partial_n^{(c)}(\mathfrak{H}^0)\subset\mathfrak{H}^0$ as will be shown in Proposition \ref{cor2}. Then, the main result of the present paper is stated.

\begin{thm}\label{mthm}
{\it For any $n\ge 1$ and any $c\in\mathbb{Q}$, we have $\partial_n^{(c)}(\mathfrak{H}^0)\subset \mathrm{ker}\mathit{Z}$.}
\end{thm}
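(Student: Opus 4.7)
The plan is to establish Theorem \ref{mthm} by exhibiting $\partial_n^{(c)}(\mathfrak{H}^0)$ as a combination of elements to which Kawashima's relation applies, thereby reducing the whole family (parametrized by $n$ and $c$) to a single known family of relations in $\ker Z$. Since the case $c=0$ is precisely Fact \ref{thm1}, the substance of the theorem lies in the $c$-dependent correction, and ideally this correction should also fall under the Kawashima umbrella rather than require a separate argument.

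First I would pin down the algebraic behaviour of $\theta^{(c)}$ and $\partial_n^{(c)}$. By iterating the defining rule \eqref{19}, an induction on word length yields an explicit expression for $\theta^{(c)}(w)$ as $\theta(w)$ plus a correction summing over factorizations $w=uv$ of the form $c\,\partial_1(u)\,H(v)\,v$. Combined with the bracket definition of $\partial_n^{(c)}$, this should allow a proof of Proposition \ref{cor2} (i.e.\ $\partial_n^{(c)}(\mathfrak{H}^0)\subset\mathfrak{H}^0$) by induction on $n$, using that $\partial_1$ already lands in $x\mathfrak{H}y$ and that the correction term respects the subspace $\mathfrak{H}^0$ appropriately.

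The central step is to package all $\partial_n^{(c)}$ into the generating series
\[
\Phi^{(c)}(u)=\sum_{n\ge 1}\partial_n^{(c)}\,u^{n-1}=\exp\!\bigl(u\,\mathrm{ad}(\theta^{(c)})\bigr)(\partial_1),
\]
and to compare $\Phi^{(c)}$ against the family of operators that naturally underlies Kawashima's relation. Concretely, one would show that for any $w\in\mathfrak{H}^0$ the element $\Phi^{(c)}(u)(w)$ decomposes, coefficient by coefficient in $u$ and $c$, into words of Kawashima-type, so that $Z\bigl(\partial_n^{(c)}(w)\bigr)=0$ follows by invoking Kawashima's relation termwise. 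When $c=0$ this recovers the derivation relation, and the $c$-dependent pieces should correspond to the Kawashima relations of higher depth produced by the additional $\partial_1(u)H(v)v$ terms.

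The step I expect to be the main obstacle is exactly this identification: iterating the non-Leibnitz rule \eqref{19} produces nested sums indexed by compositions and factorizations whose bookkeeping is delicate, and the parameter $c$ enters polynomially in ways that must be matched precisely against Kawashima's formulation. Organising these sums so that they line up with Kawashima's side term-by-term—and verifying that every piece actually lies within the scope of Kawashima's theorem rather than a proper subfamily of it—is where the real technical work will lie. Once that matching is in place, Kawashima's relation immediately closes the argument for all $n\ge 1$ and all $c\in\mathbb{Q}$.
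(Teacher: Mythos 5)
Your overall strategy---reducing the theorem to Kawashima's relation (Fact \ref{thm2})---is indeed the strategy of the paper, and your observation that the whole difficulty lies in matching $\partial_n^{(c)}(w)$ against elements of $L_x\varepsilon(\mathfrak{H}y\ast\mathfrak{H}y)$ is accurate. But that matching is the entire content of the proof, and your proposal does not supply the idea that makes it work. The paper's mechanism is an operator intertwining identity: setting $\chi_x=\tau L_x\varepsilon$, one proves (Key Proposition \ref{keyprop}) that for each $n$ and $c$ there is a $w=w(n,c)\in\mathfrak{H}y$ with $\partial_n^{(c)}\chi_x=\chi_x\mathcal{H}_w$ on $\mathfrak{H}^1$, where $\mathcal{H}_w$ is harmonic multiplication by $w$. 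This immediately gives $\partial_n^{(c)}(x\mathfrak{H}y)\subset\tau L_x\varepsilon(\mathfrak{H}y\ast\mathfrak{H}y)$, and the duality formula (itself contained in Kawashima's relation) removes the outer $\tau$. Nothing in your sketch points toward the harmonic product or toward a commutation relation of this shape; ``decomposes, coefficient by coefficient, into words of Kawashima-type'' is a restatement of the goal, not a method.

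Two further concrete problems. First, your generating series $\exp(u\,\mathrm{ad}(\theta^{(c)}))(\partial_1)$ is modeled on the $c=0$ situation (where $\Delta=\exp(\sum_n\partial_n/n)$ is an automorphism and one can exploit $\Phi=\varepsilon\Delta\varepsilon$, as in Appendix 1), but for $c\neq 0$ the operators $\partial_n^{(c)}$ are not derivations, their exponential is not an automorphism, and no analogous closed form is available; the paper instead runs a delicate double induction through the auxiliary operators $\phi_n^{(c)}$, $\psi_n^{(c)}$ and the commutativity $[\partial_n^{(c)},\partial_m^{(c')}]=0$ (Proposition \ref{comm}), which occupies two sections. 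Second, even the preliminary step you invoke, Proposition \ref{cor2}, is not a routine induction from the rule \eqref{19}: in the paper it rests on the identity $\partial_n^{(c)}(wu)=\partial_n^{(c)}(w)u+\mathrm{sgn}(u)\phi_{n-1}^{(c)}(wx)y$, which is itself a byproduct of the commutativity machinery. So the proposal correctly names the target but leaves the proof's technical core unbuilt; as it stands it is a plan, not a proof.
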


When $c$ is viewed as a variable, $\partial_n^{(c)}(w)$ ($w\in\mathfrak{H}^0$) is a polynomial in $c$ of degree $n-1$. Then, Theorem \ref{mthm} implies that each coefficient with respect to $c$ of $\partial_n^{(c)}(w)$, $n\ge 1$, $w\in\mathfrak{H}^0$, is a relation among MZV's. We find that the derivation relation is the constant term of the quasi-derivation relation as a polynomial in $c$, and hence, the class of the derivation relation is contained in the class of the quasi-derivation relation and is again shown to be a class of relations among MZV's. 

The table below gives the maximal numbers of linearly independent relations supplied by each set of relations, together with the numbers $d_k$, the conjectural dimension of the space generated by MZV's of weight $k$, and $2^{k-2}$, the total number of indices of weight $k$. Computations were performed using Risa/Asir, an open source general computer algebra system. 

\begin{center}
\begin{tabular}{|c||c|c|c|c|c|c|c|c|c|c|c|c|}
\hline
weight $k$ & 3 & 4 & 5 & 6 & 7 & 8 & 9 & 10 & 11 & 12 & 13 & 14  \\
\hhline{|=#=|=|=|=|=|=|=|=|=|=|=|=|}
$2^{k-2}$ & 2 & 4 & 8 & 16 & 32 & 64 & 128 & 256 & 512 & 1024 & 2048 & 4096  \\
\hline
$d_k$ & 1 & 1 & 2 & 2 & 3 & 4 & 5 & 7 & 9 & 12 & 16 & 21  \\
\hline
$\partial_n$ & 1 & 2 & 5 & 10 & 22 & 44 & 90 & 181 & 363 & 727 & 1456 & 2912  \\
\hline
$\partial_n^{(c)}$ & 1 & 2 & 5 & 10 & 23 & 46 & 98 & 200 & 410 & 830 & 1679 & $\cdots$  \\
\hline
Ohno & 1 & 2 & 5 & 10 & 23 & 46 & 98 & 199 & 411 & 830 & 1691 & $\cdots$  \\
\hline
lin. K. & 1 & 2 & 5 & 10 & 23 & 46 & 98 & 200 & 413 & 838 & 1713 & $\cdots$   \\
\hline
alg. K. & 1 & 3 & 6 & 14 & 29 & 60 & 123 & 249 & 503 & 1012 & $\cdots$ & $\cdots$  \\
\hline
\end{tabular}
\end{center}

\vspace{5pt}

The label `$\partial_n$' denotes the class of the derivation relation and the label `$\partial_n^{(c)}$' the class of the quasi-derivation relation, generated by the coefficients of $\partial_n^{(c)}(w)$, $w\in\mathfrak{H}^0$, as a polynomial in $c$. The label `Ohno' denotes Ohno's relation; see \cite{O} for details. Kawashima proved in \cite{Kawa} a class of algebraic relations among MZV's. The label `lin. K.' denotes the linear part of Kawashima's relation and the bottom column `alg. K.' the union of the linear part and the degree 2 part of Kawashima's relation, where products of MZV's are linearly expanded according to the (iterated integral) shuffle product rule; see \cite{IKZ} for example. The sequence of alg.K. suggests that the whole set of Kawashima's relation is enough to reduce the dimensions of the space generated by MZV's to the conjectural ones. 

Further experiments using Risa/Asir enable us to find some facts or expectations. In fact, the sequence of `lin. K.' appears again as the sequence of following three classes, `$\partial_n^{(c)}$' $\cup$ `Ohno', `$\partial_n^{(c)}$' $\cup$ `lin. K.', `Ohno' $\cup$ `lin. K.', up to weight $13$. Hence, three classes `$\partial_n^{(c)}$', `Ohno' and `lin. K.' coincide up to weight $9$. In addition, from weight $10$ to $13$ (and probably for higher weights), `$\partial_n^{(c)}$' and `Ohno' are different classes but both are contained in `lin. K.' properly. 

Ohno's relation is known to be equivalent to the union of two classes of relations, the derivation relation and the duality formula. A proof of this equivalence is given in Appendix 2. (See \cite{AK}, too.) Kawashima showed in \cite{Kawa} that the duality formula is contained in `lin. K.' The quasi-derivation relation is also contained in `lin. K.', which is shown in the present paper. Although the table and further experiments stated above imply that these two classes are equivalent, only one side inclusion: `$\partial_n^{(c)}$' $\cup$ $\{$the duality formula$\}$ $\subset$ `lin. K.' can be shown herein.

\ack
The author is grateful to Professor Masanobu Kaneko and the referee for many useful comments and advice. He also thanks Professor Masayuki Noro for his help in making programs by using Risa/Asir.

\section{Proof of Main Result}\label{sec2}

\noindent The main theorem (Theorem \ref{mthm}) is proven by reducing the theorem to the following Kawashima's relation. 

Let $z_k=x^{k-1}y$ for $k\ge 1$. The harmonic product $\ast :\mathfrak{H}^1\times\mathfrak{H}^1\to\mathfrak{H}^1$ is a $\mathbb{Q}$-bilinear map defined by the following rules. 
\begin{eqnarray*}
&\mathrm{i})& \quad\text{For any}~w\in\mathfrak{H}^1,~1\ast w=w\ast 1=w. \\
&\mathrm{ii})&  \quad\text{For any}~w, w^{\prime}\in\mathfrak{H}^1~\text{and any}~k,l\ge 1, \\
&{}& \quad z_kw\ast z_lw^{\prime}=z_k(w\ast z_lw^{\prime})+z_l(z_kw\ast w^{\prime})+z_{k+l}(w\ast w^{\prime}). 
\end{eqnarray*}
This is, as shown in \cite{H}, an associative and commutative product on $\mathfrak{H}^1$. 

Denote by $\varepsilon$ the automorphism of $\mathfrak{H}$ defined by $\varepsilon(x)=z=x+y$ and $\varepsilon(y)=-y$. For any $w\in\mathfrak{H}$, define the operator $L_w$ on $\mathfrak{H}$ by $L_w(w^{\prime})=ww^{\prime}~(w^{\prime}\in\mathfrak{H})$. Next, the linear part of Kawashima's relation \cite[Corollary~4.9]{Kawa} is stated using the notation of the present paper. 

\begin{fact}[Kawashima's Relation]\label{thm2}
{\it $L_x\varepsilon(\mathfrak{H}y\ast\mathfrak{H}y)\subset \mathrm{ker}\mathit{Z}$.}
\end{fact}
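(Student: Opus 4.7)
My plan is to prove Kawashima's relation via the interplay between series and iterated-integral representations of MZVs, paralleling what I take to be Kawashima's original strategy.

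First, I would interpret $Z\circ L_x\circ\varepsilon$ on $\mathfrak{H}y$ as a concrete one-variable functional. For $w\in\mathfrak{H}y$, the word $L_x\varepsilon(w)$ is automatically admissible (it starts with $x$ and, since $\varepsilon(y)=-y$, ends in $-y$), so $Z(L_x\varepsilon(w))$ is well defined. I would then show $Z(L_x\varepsilon(w))$ admits a closed one-variable integral expression of the form
\[ Z(L_x\varepsilon(w))=\int_0^1 \mathrm{Li}_w(1-t)\,\frac{dt}{t} \]
(suitably regularized), where $\mathrm{Li}_w$ is the multiple polylogarithm associated to $w$. The key observations here are that $\varepsilon$ implements the change of variable $t\mapsto 1-t$ on the standard iterated-integral domain for $Z$, and that the prefactor $L_x$ contributes precisely the logarithmic kernel $dt/t$ at the outermost integration.

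Second, I would invoke the fact that the harmonic product on $\mathfrak{H}y$ corresponds to pointwise multiplication of multiple polylogarithms, $\mathrm{Li}_{w_1\ast w_2}(t)=\mathrm{Li}_{w_1}(t)\,\mathrm{Li}_{w_2}(t)$, which encodes the stuffle identity on the underlying Dirichlet series. Combined with the previous step, this reduces the claim to showing
\[ \int_0^1 \mathrm{Li}_{w_1}(1-t)\,\mathrm{Li}_{w_2}(1-t)\,\frac{dt}{t}=0 \]
for all $w_1,w_2\in\mathfrak{H}y$, after regularization. The plan to prove this vanishing is to re-expand each factor $\mathrm{Li}_{w_i}(1-t)$ through the iterated-integral (shuffle) representation, multiply out, and verify that every resulting admissible word lies in $\ker Z$ by the shuffle product identity $Z(u\cdot_{\mathrm{sh}} v)=Z(u)Z(v)$ together with the admissibility enforced by $L_x$; the non-admissible leftovers should cancel pairwise between symmetric contributions.

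The principal obstacle, as I see it, is regularization at the endpoints. Elements of $\mathfrak{H}y$ include non-admissible words (those with leading $y$), so $\mathrm{Li}_{w_i}(1-t)$ generally diverges as $t\to 0^+$, and the integrand has a singularity at $t=0$ from the $dt/t$ factor as well. I would follow the Ihara--Kaneko--Zagier regularization framework, introducing a formal indeterminate $T$ absorbing powers of $\log(1-t)$, and show that the divergent contributions from $w_1$ and $w_2$ combine into finite boundary data that cancel between the shuffle and stuffle expansions. Making this cancellation precise without circularly invoking the very double-shuffle relations one is trying to extract is the delicate technical step; once it is in place, the rest is iterated-integral bookkeeping.
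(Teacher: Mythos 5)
First, a point of comparison: the paper does not prove this statement at all. It is imported as a Fact from Kawashima's work (\cite[Corollary~4.9]{Kawa}), and the entire strategy of the article is to reduce the quasi-derivation relation \emph{to} it. So there is no in-paper proof to measure yours against; what can be assessed is whether your sketch would establish Kawashima's theorem on its own, and as written it would not, for two independent reasons.

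The fatal step is the claimed pointwise identity $\mathrm{Li}_{w_1\ast w_2}(t)=\mathrm{Li}_{w_1}(t)\,\mathrm{Li}_{w_2}(t)$. The stuffle decomposition is an identity of \emph{truncated} harmonic sums, $a_{w_1}(m)\,a_{w_2}(m)=a_{w_1\ast w_2}(m)$ with $a_w(m)=\sum_{m\ge m_1>\cdots>m_n>0}m_1^{-k_1}\cdots m_n^{-k_n}$; once each factor carries the weight $t^{m_1}$ attached to its own leading index, the product produces $t^{m_1+m_1'}$ rather than $t^{\max(m_1,m_1')}$, and the identity fails for $t\neq 1$. Concretely, the coefficient of $t^2$ in $\mathrm{Li}_1(t)^2$ is $1$, while in $\mathrm{Li}_{z_1\ast z_1}(t)=2\,\mathrm{Li}_{1,1}(t)+\mathrm{Li}_2(t)$ it is $5/4$. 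This is exactly why Kawashima works with the truncated sums $a_w(m)$ and their Newton (finite-difference) interpolation, where multiplicativity genuinely holds, rather than with one-variable polylogarithms. Your first step is also off: $\varepsilon$ ($x\mapsto x+y$, $y\mapsto-y$) does not implement $t\mapsto 1-t$ on the iterated-integral representation --- that substitution exchanges $dt/t$ and $dt/(1-t)$ and reverses words, i.e.\ it realizes the duality $\tau$. The map $\varepsilon$ corresponds (up to sign) to the Landen-type substitution $t\mapsto t/(t-1)$, under which $dt/t\mapsto dt/t+dt/(1-t)$; on generating functions this is the Euler/binomial transform, which is precisely how $\varepsilon$ enters Kawashima's Newton-series argument. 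The proposed formula already fails for $w=y$: here $L_x\varepsilon(y)=-xy$ is admissible with $Z(-xy)=-\zeta(2)$, whereas $\int_0^1\mathrm{Li}_1(1-t)\,dt/t=\int_0^1(-\log t)\,dt/t$ diverges like $(\log\epsilon)^2$, so no endpoint regularization of the kind you describe can reconcile the two sides. To repair the argument you would need to replace both pillars --- work with $a_w(m)$ and its Newton series in place of $\mathrm{Li}_w(1-t)$, and derive the role of $\varepsilon$ from the binomial transform --- at which point you have reconstructed Kawashima's proof rather than an alternative to it.
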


Let $\tau$ be the anti-automorphism of $\mathfrak{H}$ defined by $\tau(x)=y$ and $\tau(y)=x$. The duality formula states that $(1-\tau)(\mathfrak{H}^0)\subset \mathrm{ker}\mathit{Z}$. To prove Theorem \ref{mthm}, the inclusion 
\begin{equation}
\partial_n^{(c)}(\mathfrak{H}^0)\subset\tau L_x\varepsilon (\mathfrak{H}y\ast\mathfrak{H}y) \label{1}
\end{equation}
is shown. In \cite{Kawa}, Kawashima proved that Kawashima's relation contains the duality formula:
\begin{displaymath}
(1-\tau)(\mathfrak{H}^0)\subset L_x\varepsilon(\mathfrak{H}y\ast\mathfrak{H}y),
\end{displaymath}
and hence,
\begin{displaymath}
\mathrm{RHS}~\mathrm{of}~(\ref{1})=(1-(1-\tau)) L_x\varepsilon (\mathfrak{H}y\ast\mathfrak{H}y)\subset L_x\varepsilon (\mathfrak{H}y\ast\mathfrak{H}y).
\end{displaymath}
Therefore, based on Kawashima's relation, the inclusion (\ref{1}) gives Theorem \ref{mthm}. 

To prove (\ref{1}), the following key identity, which involves several operators, is established. For any $w\in\mathfrak{H}$, let $R_w$ be the operator defined by $R_w(w^{\prime})=w^{\prime}w~(w^{\prime}\in\mathfrak{H})$. The operator $\mathcal{H}_w$ on $\mathfrak{H}^1$ for any $w\in\mathfrak{H}^1$ given by $\mathcal{H}_w(w^{\prime})=w\ast w^{\prime}~(w^{\prime}\in\mathfrak{H}^1)$ is also introduced. Set $\chi_x=\tau L_x\varepsilon$.

\begin{Keyprop}\label{keyprop}
{\it For any $n\ge 1$ and any $c\in\mathbb{Q}$, there exists an element $w=w(n,c)\in\mathfrak{H}y$ such that $\partial_n^{(c)}\chi_x = \chi_x\mathcal{H}_w$ on $\mathfrak{H}^1$. In other words, the following commutative diagram holds:
\[
\begin{CD}
\mathfrak{H}^1 @>{\mathcal{H}_w}>>    \mathfrak{H}^1 \\
@V{\chi_x}VV                          @VV{\chi_x}V   \\
\mathfrak{H}^0 @>>{\partial_n^{(c)}}> \mathfrak{H}^0 
\end{CD}
\]
}
\end{Keyprop}

This proposition implies the explicit expression of $w=w(n,c)$. The identity holds on $\mathfrak{H}^1$, and hence on $\mathbb{Q}$. Applying the operetors to $1$ $(\in\mathbb{Q})$, we find $\partial_n^{(c)}\chi_x(1)=\chi_x\mathcal{H}_w(1)=\chi_x(w)$, and hence, we have $w=\chi_x^{-1}\partial_n^{(c)}(y)=\varepsilon L_x^{-1}\tau\partial_n^{(c)}(y)$. Because of Proposition \ref{cor2}, the operator $L_x^{-1}$ makes sence (, that means to remove the head letter $x$ of every term). 

The proof of this proposition is the technical core of the present paper and will be carried out in the next two sections. In addition, various beneficial properties of operators, including the commutativity of $\partial_n^{(c)}$'s, are proven. 

Assuming Key Proposition \ref{keyprop}, the proof of Theorem \ref{mthm} proceeds as follows. First, note that it is sufficient to prove the inclusion $\partial_n^{(c)}(x\mathfrak{H}y)\subset\tau L_x\varepsilon (\mathfrak{H}y\ast\mathfrak{H}y)$ instead of (\ref{1}) because $\mathfrak{H}^0=\mathbb{Q}+x\mathfrak{H}y$ and $\partial_n^{(c)}(\mathbb{Q})=\{0\}$. Take any $w_0\in x\mathfrak{H}y$. Since $\varepsilon$ is an automorphism of $\mathfrak{H}$ and $\varepsilon (y)=-y$, we have 
$ R_y\tau\varepsilon (\mathfrak{H}y)=x\mathfrak{H}y, $
and hence, there is an element $w_1\in\mathfrak{H}y$ such that 
$ w_0=R_y\tau\varepsilon (w_1). $
By Key Proposition \ref{keyprop}, there exists $w_2\in\mathfrak{H}y$ satisfying 
$ \partial_n^{(c)}R_y\tau\varepsilon = \tau L_x\varepsilon\mathcal{H}_{w_2}. $
Therefore, we have 
$$ \partial_n^{(c)}(w_0)=\partial_n^{(c)}R_y\tau\varepsilon (w_1)=\tau L_x\varepsilon\mathcal{H}_{w_2}(w_1)=\tau L_x\varepsilon (w_2\ast w_1). $$
This proves $(\ref{1})$ and Theorem \ref{mthm} is established.

\section{Commutativity of $\partial_n^{(c)}$}\label{sec4}

\noindent To prove Key Proposition \ref{keyprop}, several properties of various operators are needed, and the commutativity of $\partial_n^{(c)}$'s must first be proven. 

\begin{prop}\label{prop5}
{\it Let $c\in\mathbb{Q}$. For any $n,m\ge 1$, we have $[\partial_n^{(c)},\partial_m^{(c)}]=0$. }
\end{prop}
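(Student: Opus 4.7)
The plan is to reduce Proposition~\ref{prop5} to the single base case $[\partial_1, \partial_m^{(c)}]=0$ by a Lie-algebraic induction on $n$, and then to attack the base case by exploiting a generalized Leibniz rule for $\partial_n^{(c)}$.

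First I would carry out the induction on $n$ while leaving $m$ arbitrary. For $n\ge 2$, the recursion $\partial_n^{(c)}=\frac{1}{n-1}[\theta^{(c)},\partial_{n-1}^{(c)}]$ together with the Jacobi identity in $\mathrm{End}_{\mathbb{Q}}(\mathfrak{H})$ yields
\begin{equation*}
(n-1)[\partial_n^{(c)},\partial_m^{(c)}]=[\theta^{(c)},[\partial_{n-1}^{(c)},\partial_m^{(c)}]]-m[\partial_{n-1}^{(c)},\partial_{m+1}^{(c)}],
\end{equation*}
where I used the relation $[\theta^{(c)},\partial_m^{(c)}]=m\partial_{m+1}^{(c)}$ which is immediate from Definition~\ref{def1}. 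Under the inductive hypothesis that $\partial_{n-1}^{(c)}$ commutes with every $\partial_k^{(c)}$, both summands on the right vanish, so $[\partial_n^{(c)},\partial_m^{(c)}]=0$ for all $m$. This collapses the entire statement to showing $[\partial_1,\partial_m^{(c)}]=0$ for every $m\ge 1$.

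For the base case the naive iteration of the same Jacobi trick reduces $[\partial_1,\partial_m^{(c)}]$ only to $-\frac{1}{m-1}[\partial_2^{(c)},\partial_{m-1}^{(c)}]$, a commutator of the same total weight, so the manipulation becomes circular. I would instead exploit the non-derivation correction of $\partial_n^{(c)}$. A direct computation from \eqref{19} gives the quasi-Leibniz identity
\begin{equation*}
\partial_2^{(c)}(ww')=\partial_2^{(c)}(w)w'+w\partial_2^{(c)}(w')+c\,\partial_1(w)\partial_1(w'),
\end{equation*}
and by induction on $n$ one expects an analogous generalized Leibniz rule for each $\partial_n^{(c)}$ whose correction is expressible through $\partial_1,\partial_2^{(c)},\ldots,\partial_{n-1}^{(c)}$ and the degree derivation $H$. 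Once such a formula is in hand, one verifies that the correction terms in $\partial_1\partial_n^{(c)}(ww')$ and $\partial_n^{(c)}\partial_1(ww')$ cancel, so that $[\partial_1,\partial_n^{(c)}]$ is itself an ordinary derivation on $\mathfrak{H}$. The base case is then reduced to checking $[\partial_1,\partial_n^{(c)}]$ on the generators $x$ and $y$, which should proceed along the lines of the IKZ verification for $\partial_n$, exploiting $\partial_1(z)=0$ (where $z=x+y$) and the concrete description $\partial_n^{(c)}(x)=-\partial_n^{(c)}(y)$.

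The main obstacle is the structural preparation: obtaining a manageable formula for the correction $\partial_n^{(c)}(ww')-\partial_n^{(c)}(w)w'-w\partial_n^{(c)}(w')$ and checking that its image under $\partial_1$ is symmetric in $w$ and $w'$ up to the appropriate cancellation. This is where the specifics of the quasi-derivation $\theta^{(c)}$ must enter; purely Lie-algebraic manipulations are insufficient, as witnessed by the circularity noted above. Granted this Leibniz-type identity for $\partial_n^{(c)}$, the derivation check on the generators and the subsequent induction on $n$ then close the proof without further difficulty.
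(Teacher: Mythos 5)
Your opening reduction is correct and is a genuine simplification of the bookkeeping: the Jacobi identity together with $[\theta^{(c)},\partial_m^{(c)}]=m\partial_{m+1}^{(c)}$ does reduce the whole proposition to the single family $[\partial_1,\partial_m^{(c)}]=0$, $m\ge 1$ (the paper instead runs an intertwined induction over all pairs $(n,i)$). The difficulty is that all of the content is then concentrated in that base case, and there your outline has two unproved steps, one of which is circular as stated. The generalized Leibniz rule you want for $\partial_m^{(c)}$ is not available before commutativity is known: writing $\partial_m^{(c)}(ww')-\partial_m^{(c)}(w)w'-w\partial_m^{(c)}(w')=\sum_\alpha c_\alpha A_\alpha(w)B_\alpha(w')$ and pushing this through $\mathrm{ad}(\theta^{(c)})$ using (\ref{19}), the correction for $\partial_{m+1}^{(c)}$ acquires the term $c\,[\partial_1,\partial_m^{(c)}](w)H(w')$, together with terms $c\,\partial_1A_\alpha(w)HB_\alpha(w')-c\,A_\alpha\partial_1(w)B_\alpha H(w')$ that collapse to something $H$-free only if $[\partial_1,A_\alpha]=0$. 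So the quasi-Leibniz rule presupposes exactly the commutators you are trying to prove; this is why the paper presents those rules as consequences of Proposition \ref{prop5}, not as tools for it. The circle can be broken by a simultaneous induction (prove the Leibniz rule for $\partial_{m+1}^{(c)}$ and $[\partial_1,\partial_{m+1}^{(c)}]=0$ together, observing that the correction monomials for $\partial_{m+1}^{(c)}$ involve only $\partial_k^{(c)}$ with $k\le m$), but you have to set this up explicitly; as written the logical order is backwards.

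More seriously, the concluding ``check on the generators'' is not a routine verification when $c\ne 0$. Granting that $[\partial_1,\partial_m^{(c)}]$ is a derivation, you must verify $\partial_1\partial_m^{(c)}(x)=\partial_m^{(c)}(xy)$, and expanding the right-hand side by your quasi-Leibniz rule requires an explicit formula for $\partial_m^{(c)}(x)$ and for the values of every correction monomial on $x$ and $y$. Unlike the case $c=0$, where $\partial_m(x)=xz^{m-1}y$ is given by definition, no closed form for $\partial_m^{(c)}(x)$ is available a priori; the paper obtains $\partial_m^{(c)}(x)=\phi_{m-1}^{(c)}(x)y$ only through the operator identity $[\partial_m^{(c)},R_u]=\mathrm{sgn}(u)R_y\phi_{m-1}^{(c)}R_x$ (statement $(\mathrm{A}_m)$ together with Lemma \ref{lem1}), which is precisely the machinery its proof is built on. So the generator check is not an endgame detail but the place where the real work lives, and your proposal does not indicate how to carry it out. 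For comparison, the paper sidesteps both issues by never invoking a Leibniz rule: by Lemma \ref{lem9} it suffices to show $[[\partial_n^{(c)},\partial_m^{(c)}],R_u]=0$ for $u\in\{x,y\}$, the commutators $[\partial_n^{(c)},R_u]$ are computed in closed form via the auxiliary operators $\psi_n^{(c)}=R_y\phi_{n-1}^{(c)}R_x$, and everything then reduces to the commutativity of $R_z$ with the $\phi_k^{(c)}$.
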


As mentioned earlier, the operator $\partial_n^{(c)}$ is no longer a derivation if $c\neq 0$ and $n\ge 2$ and does not satisfy the Leibniz rule, instead, satisfying the rules such as
\begin{eqnarray*}
\displaystyle &\partial_2^{(c)}(ww^{\prime})& = \partial_2^{(c)}(w)w^{\prime}+w\partial_2^{(c)}(w^{\prime})+c\partial_1^{(c)}(w)\partial_1^{(c)}(w^{\prime}), \\
\displaystyle &\partial_3^{(c)}(ww^{\prime})& = \partial_3^{(c)}(w)w^{\prime}+w\partial_3^{(c)}(w^{\prime})+\frac{1}{2}c\partial_2^{(c)}(w)\partial_1^{(c)}(w^{\prime})+\frac{3}{2}c\partial_1^{(c)}(w)\partial_2^{(c)}(w^{\prime}) \\
&{}& \displaystyle \quad +\frac{1}{2}c^2{\partial_1^{(c)}}^2(w)\partial_1^{(c)}(w^{\prime}), 
\end{eqnarray*}
for any $w,w^{\prime}\in\mathfrak{H}$, which can be checked using the definition of the operator $\partial_n^{(c)}$ and Proposition \ref{prop5}, namely, the commutativity of $\partial_n^{(c)}$. The subalgebra $\mathsf{A}^{(c)}$ of linear endomorphisms of $\mathfrak{H}$ generated by $\partial_1$, $\theta^{(c)}$ and $H$ (, and hence, $\partial_n^{(c)}\in\mathsf{A}^{(c)}$) has the structure of Connes-Moscovici's Hopf algebra (see \cite{CM}), which is helpful to calculate such rule of $\partial_n^{(c)}$. 

To prove Proposition \ref{prop5}, the following several operators are needed. Recall the left and right multiplication operators are both additive as well as multiplicative ($L_{ww^{\prime}}=L_wL_{w^{\prime}}$) and anti-multiplicative ($R_{ww^{\prime}}=R_{w^{\prime}}R_w$), respectively. 

\begin{defn}\label{def2}
{\it Let $c$ be a rational number. The operators $\{\phi_n^{(c)}\}_{n=0}^{\infty}$ are defined by $\phi_0^{(c)}=\mathrm{id}_{\mathfrak{H}}$ and the recursive rule: }
\begin{equation}
\displaystyle \phi_n^{(c)}=\frac{1}{n}\bigl([\theta^{(c)},\phi_{n-1}^{(c)}]+\frac{1}{2}(R_z\phi_{n-1}^{(c)}+\phi_{n-1}^{(c)}R_z)+c\partial_1\phi_{n-1}^{(c)}\bigl), \label{4}
\end{equation}
{\it for $n \ge 1$.}
\end{defn}

\begin{lem}\label{lem1}
{\it For $n\ge 1$, let $\psi_n^{(c)}=R_y\phi_{n-1}^{(c)}R_x$. The operators $\{\psi_n^{(c)}\}_{n=1}^{\infty}$ satisfy $\psi_1^{(c)}=R_{xy}$ and the recursive rule
$$ \displaystyle \psi_n^{(c)}=\frac{1}{n-1}\bigl([\theta^{(c)},\psi_{n-1}^{(c)}]-\frac{1}{2}(R_z\psi_{n-1}^{(c)}+\psi_{n-1}^{(c)}R_z)-c\psi_{n-1}^{(c)}\partial_1\bigl) $$
for $n \ge 2$.}
\end{lem}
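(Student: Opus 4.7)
The plan is to verify the two claims by direct computation, with an induction (in fact, a side-by-side substitution) powered by two commutator identities for $\theta^{(c)}$. The base case is immediate: since $\phi_0^{(c)}=\mathrm{id}_{\mathfrak{H}}$, one gets $\psi_1^{(c)}=R_yR_x$, and the anti-multiplicativity $R_{ww^{\prime}}=R_{w^{\prime}}R_w$ immediately gives $R_yR_x=R_{xy}$.

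For $n\ge 2$, I would first record the two key commutation rules
\begin{equation*}
[\theta^{(c)},R_x]=R_{\theta(x)}+cR_x\partial_1,\qquad [\theta^{(c)},R_y]=R_{\theta(y)}+cR_y\partial_1,
\end{equation*}
which follow from applying the defining rule (\ref{19}) to $\theta^{(c)}(wx)$ and $\theta^{(c)}(wy)$, using $\theta^{(c)}(x)=\theta(x)$, $\theta^{(c)}(y)=\theta(y)$, and $H(x)=x$, $H(y)=y$. Combined with the identities $R_{\theta(x)}=\tfrac{1}{2}(R_zR_x+R_xR_z)$ and $R_{\theta(y)}=\tfrac{1}{2}(R_zR_y+R_yR_z)$, these let me expand
\begin{equation*}
[\theta^{(c)},\psi_{n-1}^{(c)}]=R_y[\theta^{(c)},\phi_{n-2}^{(c)}]R_x+R_{\theta(y)}\phi_{n-2}^{(c)}R_x+R_y\phi_{n-2}^{(c)}R_{\theta(x)}+cR_y\partial_1\phi_{n-2}^{(c)}R_x+c\psi_{n-1}^{(c)}\partial_1 .
\end{equation*}

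On the other hand, substituting the recursion (\ref{4}) for $\phi_{n-1}^{(c)}$ into $\psi_n^{(c)}=R_y\phi_{n-1}^{(c)}R_x$ yields
\begin{equation*}
(n-1)\psi_n^{(c)}=R_y[\theta^{(c)},\phi_{n-2}^{(c)}]R_x+\tfrac{1}{2}(R_yR_z\phi_{n-2}^{(c)}R_x+R_y\phi_{n-2}^{(c)}R_zR_x)+cR_y\partial_1\phi_{n-2}^{(c)}R_x .
\end{equation*}
The $c\psi_{n-1}^{(c)}\partial_1$ term in the expansion of $[\theta^{(c)},\psi_{n-1}^{(c)}]$ is precisely what the correction $-c\psi_{n-1}^{(c)}\partial_1$ in the claim removes, and a brief manipulation with $R_{\theta(x)},R_{\theta(y)}$ shows that
\begin{equation*}
R_{\theta(y)}\phi_{n-2}^{(c)}R_x+R_y\phi_{n-2}^{(c)}R_{\theta(x)}-\tfrac{1}{2}(R_z\psi_{n-1}^{(c)}+\psi_{n-1}^{(c)}R_z)=\tfrac{1}{2}(R_yR_z\phi_{n-2}^{(c)}R_x+R_y\phi_{n-2}^{(c)}R_zR_x) .
\end{equation*}
This is exactly the sign flip that converts the $+\tfrac{1}{2}(R_z\phi+\phi R_z)$ of (\ref{4}) into the $-\tfrac{1}{2}(R_z\psi+\psi R_z)$ of the claimed recursion, completing the verification.

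The main obstacle is purely bookkeeping: because $\theta^{(c)}$ is not a derivation the commutators $[\theta^{(c)},R_x]$ and $[\theta^{(c)},R_y]$ each carry an extra $c$-term, and the anti-multiplicativity of $R_\bullet$ means that the placement of $R_z$ on the left versus the right of $\phi_{n-2}^{(c)}$ must be tracked with care. Once those two commutation rules and the anti-multiplicative identities for $R_{\theta(x)},R_{\theta(y)}$ are set down, however, the proof is a one-line cancellation.
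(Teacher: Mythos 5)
Your proof is correct and follows essentially the same route as the paper: both expand $[\theta^{(c)},R_y\phi_{n-2}^{(c)}R_x]$ via the commutator Leibniz rule, use $[\theta^{(c)},R_u]=R_{\theta(u)}+cR_u\partial_1=\tfrac{1}{2}(R_zR_u+R_uR_z)+cR_u\partial_1$ for $u\in\{x,y\}$, and compare with the recursion (\ref{4}) for $\phi_{n-1}^{(c)}$. Your observation that no genuine induction hypothesis is needed (only the definition $\psi_{n-1}^{(c)}=R_y\phi_{n-2}^{(c)}R_x$ and the $\phi$-recursion) is accurate, though it is only a difference of presentation from the paper.
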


\begin{proof}
The lemma is proven by induction on $n$. The lemma holds for $n=1$ because $R_{xy}=R_yR_x$. Assume that the lemma is proved for $n$. Because of the identities $[\theta^{(c)}, R_u]=R_{\theta (u)}+cR_u\partial_1=\frac{1}{2}(R_zR_u+R_uR_z)+cR_u\partial_1$ for $u=x$ or $y$, the recursive rule of $\phi_n^{(c)}$ and the induction hypothesis, we have
\begin{eqnarray*}
\displaystyle [\theta^{(c)},\psi_{n-1}^{(c)}] &=& [\theta^{(c)},R_y\phi_{n-2}^{(c)}R_x] \\
\displaystyle &=& R_y\phi_{n-2}^{(c)}[\theta^{(c)},R_x]+R_y[\theta^{(c)},\phi_{n-2}^{(c)}]R_x+[\theta^{(c)},R_y]\phi_{n-2}^{(c)}R_x \\
\displaystyle &=& (n-1)\psi_n^{(c)}+\frac{1}{2}(R_z\psi_{n-1}^{(c)}+\psi_{n-1}^{(c)}R_z)+c\psi_{n-1}^{(c)}\partial_1.
\end{eqnarray*}
Therefore, the lemma is proven.
\end{proof}

In order to prove Proposition \ref{prop5}, the following general property of a $\mathbb{Q}$-linear map on $\mathfrak{H}$ is needed.

\begin{lem}\label{lem9}
{\it A $\mathbb{Q}$-linear map $f:\mathfrak{H}\to\mathfrak{H}$ satisfying $[f,R_x]=[f,R_y]=0$ and $f(1)=0$ is necessarily a zero map. }
\end{lem}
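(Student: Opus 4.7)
The plan is a straightforward induction on the length of words. Since $\mathfrak{H} = \mathbb{Q}\langle x,y\rangle$ has a $\mathbb{Q}$-basis consisting of all monomials (words) in $x$ and $y$, including the empty word $1$, it suffices to prove $f(w) = 0$ for every such word $w$.

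The base case is the length-$0$ word $w = 1$, which is given directly by the hypothesis $f(1) = 0$. For the inductive step, assume $f$ vanishes on all words of length $n-1$, and let $w$ be a word of length $n \geq 1$. I factor off the \emph{rightmost} letter: write $w = w' u$ where $u \in \{x, y\}$ and $w'$ is a word of length $n-1$. Then $w = R_u(w')$, and the hypothesis $[f, R_u] = 0$ gives
\[
f(w) = f(R_u(w')) = R_u(f(w')) = R_u(0) = 0,
\]
using the inductive hypothesis in the penultimate step. By $\mathbb{Q}$-linearity, $f$ vanishes on all of $\mathfrak{H}$.

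The only mild subtlety is choosing to peel letters off on the right rather than on the left: we are given commutativity with $R_x$ and $R_y$ (right multiplication), not $L_x$ and $L_y$, so the decomposition must be $w = w' u$ rather than $w = u w'$. There is no real obstacle here; the lemma is essentially the statement that $\mathfrak{H}$ is generated as a right $\mathfrak{H}$-module by $1$ under right multiplication by $x$ and $y$, which is the content of the basis description of the free algebra.
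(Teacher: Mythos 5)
Your proof is correct and is essentially the same as the paper's: both strip letters off the right end of a word using $[f,R_u]=0$ until only $f(1)=0$ remains, the paper writing this as an unrolled chain of equalities and you packaging it as a formal induction on word length. The remark about needing right rather than left multiplication is accurate but does not change the substance.
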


\begin{proof}
Since $f$ is $\mathbb{Q}$-linear, it is only necessary to show $f(w)=0$ for any words $w\in\mathfrak{H}$. Write $w=u_1u_2\cdots u_n$ with $u_1,u_2,\ldots ,u_n\in\{x,y\}$. Since $[f,R_{u_i}]=0$ for any $1\le i\le n$ by assumption, we have
$$ f(w)=f(u_1u_2\cdots u_n)=f(u_1u_2\cdots u_{n-1})u_n=\cdots =f(1)u_1u_2\cdots u_n=0. $$
\end{proof}

Next, the commutativity property of $\partial_n^{(c)}$ is given. Instead of Proposition \ref{prop5}, the following slightly general statement is shown.

\begin{prop}\label{comm}
{\it For any $n,m\ge 1$ and any $c,c^{\prime}\in\mathbb{Q}$, we have $[\partial_n^{(c)},\partial_m^{(c^{\prime})}]=0$. }
\end{prop}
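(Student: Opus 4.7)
The strategy is to apply Lemma \ref{lem9} to the linear endomorphism $f:=[\partial_n^{(c)},\partial_m^{(c^{\prime})}]$, so one needs to verify that $f(1)=0$ and that $[f,R_x]=[f,R_y]=0$. The role of the auxiliary operators $\phi_k^{(c)}$ and $\psi_k^{(c)}$ introduced above, together with the carefully chosen recursion (\ref{4}), should be precisely to make the commutators $[\partial_n^{(c)},R_u]$ tractable.

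First I would show $f(1)=0$. Setting $w=w^{\prime}=1$ in (\ref{19}) together with $\partial_1(1)=0$ gives $\theta^{(c)}(1)=0$, and since $\partial_n^{(c)}$ is built out of $\partial_1$ and iterated commutators with $\theta^{(c)}$, it follows that $\partial_n^{(c)}(1)=0$ for every $n\ge 1$ and every $c\in\mathbb{Q}$; hence $f(1)=0$.

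Next, the key intermediate step is to establish by induction on $n$ the commutator formulas
\[
[\partial_n^{(c)},R_x]=\psi_n^{(c)},\qquad [\partial_n^{(c)},R_y]=-\psi_n^{(c)}.
\]
The base case $n=1$ is immediate from $\partial_1(x)=xy$ and $\partial_1(y)=-xy$. For the inductive step I would use $\partial_n^{(c)}=\tfrac{1}{n-1}[\theta^{(c)},\partial_{n-1}^{(c)}]$, the Jacobi identity, and the explicit formula $[\theta^{(c)},R_u]=\tfrac12(R_zR_u+R_uR_z)+cR_u\partial_1$ that appears in the proof of Lemma \ref{lem1}; the recursion (\ref{4}) for $\phi_n^{(c)}$ is designed exactly so that the right-hand side reorganises into $\pm\psi_n^{(c)}=\pm R_y\phi_{n-1}^{(c)}R_x$. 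An immediate consequence, useful later, is $[\partial_n^{(c)},R_z]=0$ since $R_z=R_x+R_y$.

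Applying Jacobi to $[f,R_x]$ and using these formulas yields
\[
[f,R_x]=[\partial_n^{(c)},\psi_m^{(c^{\prime})}]-[\partial_m^{(c^{\prime})},\psi_n^{(c)}],
\]
and similarly for $[f,R_y]$ with the opposite sign, so everything reduces to the symmetric identity
\[
[\partial_n^{(c)},\psi_m^{(c^{\prime})}]=[\partial_m^{(c^{\prime})},\psi_n^{(c)}].
\]
This is the main obstacle, and I would attack it by writing $\psi_m^{(c^{\prime})}=R_y\phi_{m-1}^{(c^{\prime})}R_x$, expanding the commutator using the formulas above for $[\partial_n^{(c)},R_x]$ and $[\partial_n^{(c)},R_y]$, and reducing to an analogous symmetric identity for the operators $\phi_k^{(c)}$. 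That last identity I expect to prove by a joint induction on $n+m$, exploiting the recursion (\ref{4}) for $\phi_k^{(c)}$ and the Connes--Moscovici Hopf-algebraic structure on the subalgebra $\mathsf{A}^{(c)}$ alluded to in the text, which provides a convenient bookkeeping for the cross-parameter commutators $[\theta^{(c)},\partial_k^{(c^{\prime})}]$ and $[\partial_1,\phi_k^{(c^{\prime})}]$ that arise. The delicate point will be handling the mixed-parameter terms ($c\neq c^{\prime}$), for which the naive identity $[\theta^{(c)},\partial_k^{(c)}]=k\,\partial_{k+1}^{(c)}$ is unavailable; this is why the proof goes through the auxiliary operators $\phi_k^{(c)}$ and $\psi_k^{(c)}$ rather than directly manipulating $\partial_k^{(c)}$.
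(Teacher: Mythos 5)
Your skeleton matches the paper's: reduce commutativity to Lemma \ref{lem9} via the commutator formulas $[\partial_n^{(c)},R_u]=\mathrm{sgn}(u)\psi_n^{(c)}$ and the Jacobi identity, then push the computation down to the auxiliary operators $\phi_k^{(c)}$. As written, however, your plan has a genuine circularity and leaves the crux unresolved. First, the induction establishing $[\partial_n^{(c)},R_u]=\mathrm{sgn}(u)\psi_n^{(c)}$ cannot be run on its own, independently of the commutativity you are trying to prove: in the inductive step, expanding $[[R_u,\theta^{(c)}],\partial_{n-1}^{(c)}]$ with $[\theta^{(c)},R_u]=\frac{1}{2}(R_zR_u+R_uR_z)+cR_u\partial_1$ produces the term $cR_u[\partial_1,\partial_{n-1}^{(c)}]$, which must vanish --- that is, you already need the case $m=1$ of the proposition one level down. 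The paper handles this by interleaving the two inductions as $(\mathrm{A}_1),(\mathrm{B}_1)\Rightarrow(\mathrm{A}_2)\Rightarrow(\mathrm{B}_2)\Rightarrow(\mathrm{A}_3)\Rightarrow\cdots$, with a further induction on the second index $i$ inside each step $(\mathrm{B}_{n+1})$. Your two-stage plan (all commutator formulas first, then the symmetric identity) would have to be reorganized in exactly this way.

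Second, the step you call the ``main obstacle'' is where the paper's one essential structural fact enters, and you do not identify it: $(\beta_n)$, the statement that $\phi_n^{(c)}$ lies in the commutative polynomial ring $\mathbb{Q}[R_z,\partial_1^{(c)},\ldots,\partial_n^{(c)}]$, homogeneous of degree $n$, which follows from the recursion (\ref{4}) once $(\mathrm{A}_i)$ and $(\mathrm{B}_i)$ are available for $i\le n$. This is what makes the commutators $[\phi_{m-1}^{(c^{\prime})},\partial_n^{(c)}]$ vanish outright (rather than merely satisfy a symmetric identity), and what lets the four leftover terms combine, via Lemma \ref{lem1} and $R_z=R_x+R_y$, into $-R_y\phi_{m-1}^{(c^{\prime})}R_z\phi_{n-1}^{(c)}R_x+R_y\phi_{n-1}^{(c)}R_z\phi_{m-1}^{(c^{\prime})}R_x=0$. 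Appealing to the Connes--Moscovici Hopf structure for ``bookkeeping'' does not substitute for this: without $(\beta_n)$, your proposed symmetric identity for the $\phi_k^{(c)}$ is not visibly easier than the original problem. So the approach is the right one, but the two points above are precisely the content of the proof rather than routine details to be filled in.
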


\begin{proof} 
In the following, $(\mathrm{A}_n)$ and $(\mathrm{B}_n)$ are shown inductively as $(\mathrm{A}_1),(\mathrm{B}_1)\Rightarrow(\mathrm{A}_2)\Rightarrow(\mathrm{B}_2)\Rightarrow(\mathrm{A}_3)\Rightarrow(\mathrm{B}_3)\Rightarrow(\mathrm{A}_4)\Rightarrow\cdots $. 

Let $\mathrm{sgn}(x)=1$ and $\mathrm{sgn}(y)=-1$. 
\begin{eqnarray*}
&(\mathrm{A}_n)&\quad [\partial_n^{(c)},R_u]=\mathrm{sgn}(u)\psi_n^{(c)}~\mathrm{for}~\mathrm{any}~c \in \mathbb{Q}~\mathrm{and}~\mathrm{any}~u\in\{x,y\}. \\
&(\mathrm{B}_n)&\quad [\partial_n^{(c)},\partial_i^{(c^{\prime})}]=0~\mathrm{for}~\mathrm{any}~1 \le i \le n~\mathrm{and}~\mathrm{any}~c,c^{\prime} \in \mathbb{Q}.
\end{eqnarray*}
Note that if $(\mathrm{B}_n)$'s for any $n\ge 1$ can be shown, the proposition is shown. 

Note the following three considerations. First, note that the statement $(\mathrm{A}_n)$ means that, for any $w\in\mathfrak{H}$ and any $u\in\{x,y\}$, 
$$ \partial_n^{(c)}(wu)=\partial_n^{(c)}(w)u+\mathrm{sgn}(u)\psi_n^{(c)}(w) $$
and implies 
\begin{eqnarray*}
(\alpha_n)&\quad [\partial_n^{(c)},R_z]=0~\mathrm{for}~\mathrm{any}~c \in \mathbb{Q} 
\end{eqnarray*}
where $z=x+y$. 

Second, let 
\begin{eqnarray*}
&(\mathrm{B}_{n,i})&\quad [\partial_n^{(c)},\partial_i^{(c^{\prime})}]=0~\mathrm{for}~\mathrm{a}~\mathrm{fixed}~1 \le i \le n~\mathrm{and}~\mathrm{any}~c,c^{\prime} \in \mathbb{Q}. 
\end{eqnarray*}
Clearly, the statement $(\mathrm{B}_n)$ is equivalent to the union of $(\mathrm{B}_{n,i})$'s for $1\le i\le n$. Owing to Lemma \ref{lem9} and $[\partial_n^{(c)},\partial_i^{(c^{\prime})}](1)=0$ by $\partial_n^{(c)}(\mathbb{Q})=0$, each $(\mathrm{B}_{n,i})$ is equivalent to the statement
\begin{eqnarray*}
&(\mathrm{B}_{n,i}^{\prime})&\quad [[\partial_n^{(c)},\partial_i^{(c^{\prime})}],R_u]=0 \\
&{}& \quad \mathrm{for}~\mathrm{a}~\mathrm{fixed}~1 \le i \le n,~\mathrm{any}~c,c^{\prime} \in \mathbb{Q},~\mathrm{and}~\mathrm{any}~u\in\{x,y\}. 
\end{eqnarray*}
Instead of $(\mathrm{B}_{n+1})$, $(\mathrm{B}_{n+1,i}^{\prime})$'s for $1\le i\le n+1$ are shown by induction on $i$. 

Third, note that the commutative polynomial ring $\mathbb{Q}[R_z,\partial_1^{(c)},\ldots ,\partial_n^{(c)}]$ can be considered if $(\mathrm{A}_i)$ (hence $(\alpha_i)$) and $(\mathrm{B}_i)$ hold for all $1\le i\le n$. Let $\mathbb{Q}[R_z,\partial_1^{(c)},\ldots ,\partial_n^{(c)}]_{(i)}$ denote the degree $i$ homogenous part with $\deg(R_z)=1$ and $\deg(\partial_d^{(c)})=d$. These assumptions together with the recursive rule (\ref{4}) give us the fact
\begin{eqnarray*}
&(\beta_n)&\quad \phi_n^{(c)} \in \mathbb{Q}[R_z,\partial_1^{(c)},\ldots ,\partial_n^{(c)}]_{(n)}~\mathrm{for}~\mathrm{any}~c \in \mathbb{Q}.
\end{eqnarray*}
Based on the above considerations, the proof of $(\mathrm{A}_n)$ and $(\mathrm{B}_n)$ is now given. Since $[\partial_1^{(c)},R_u](w)=\partial_1^{(c)}(wu)-\partial_1^{(c)}(w)u=w\partial_1^{(c)}(u)=R_{\partial_1^{(c)}(u)}(w)$ for $w\in\mathfrak{H}$ and $\mathrm{sgn}(u)\psi_1^{(c)}=\mathrm{sgn}(u)R_{xy}=R_{\partial_1^{(c)}(u)}$ for any $u\in\{x,y\}$, the statement $(\mathrm{A}_1)$ holds. The statement $(\mathrm{B}_1)$ is trivial because $\partial_1^{(c)}=\partial_1^{(c^{\prime})}=\partial_1$ for any $c,c^{\prime}\in\mathbb{Q}$.

Assume that $(\mathrm{A}_n)$ (hence $(\alpha_n)$) and $(\mathrm{B}_n)$ are proven. By the definition of $\partial_{n+1}^{(c)}$, 
$$ \displaystyle n[\partial_{n+1}^{(c)},R_u] = [[\theta^{(c)},\partial_n^{(c)}],R_u]. $$
Using Jacobi's identity, the right-hand side equals
$$ \displaystyle -[[\partial_n^{(c)},R_u],\theta^{(c)}]-[[R_u,\theta^{(c)}],\partial_n^{(c)}]. $$
By $(\mathrm{A}_n)$ and $[\theta^{(c)}, R_u]=R_{\theta (u)}+cR_u\partial_1$ for $u\in\{x,y\}$, this yields
$$ \displaystyle -\mathrm{sgn}(u)[\psi_n^{(c)},\theta^{(c)}]+[R_{\theta (u)}+cR_u\partial_1,\partial_n^{(c)}]. $$
Using $R_{\theta (u)}=\frac{1}{2}(R_zR_u+R_uR_z)$, $(\alpha_n)$, and $(\mathrm{B}_n)$, 
$$ \displaystyle [R_{\theta (u)}+cR_u\partial_1,\partial_n^{(c)}] = \frac{1}{2}(R_z[R_u,\partial_n^{(c)}]+[R_u,\partial_n^{(c)}]R_z)+c[R_u,\partial_n^{(c)}]\partial_1. $$
Hence, using $(\mathrm{A}_n)$, we have 
$$ [\partial_{n+1}^{(c)},R_u] = \frac{\mathrm{sgn}(u)}n\bigl([\theta^{(c)},\psi_n^{(c)}]-\frac{1}{2}(R_z\psi_n^{(c)}+\psi_n^{(c)}R_z)-c\psi_n^{(c)}\partial_1\bigl)=\mathrm{sgn}(u)\psi_{n+1}^{(c)}, $$
and therefore $(\mathrm{A}_{n+1})$ (as well as $(\alpha_{n+1})$) is proven. 

In order to prove $(\mathrm{B}_{n+1})$, assume that all $(\mathrm{A}_{j})$'s (hence $(\alpha_{j})$'s) for $1\le j\le n+1$ and all $(\mathrm{B}_{j})$'s (hence $(\beta_{j})$'s) for $1\le j\le n$ are proven. As mentioned above, $(\mathrm{B}_{n+1,i}^{\prime})$'s for $1\le i\le n+1$ are proven instead of $(\mathrm{B}_{n+1})$. Using Jacobi's identity, we have
\begin{equation}
[[\partial_{n+1}^{(c)},\partial_i^{(c^{\prime})}],R_u] = -[[\partial_i^{(c^{\prime})},R_u],\partial_{n+1}^{(c)}]-[[R_u,\partial_{n+1}^{(c)}],\partial_i^{(c^{\prime})}] \label{6}
\end{equation}
for every $1\le i\le n+1$. By $(\mathrm{A}_{i})$ and Lemma \ref{lem1}, 
$$ [\partial_i^{(c)},R_u]=\mathrm{sgn}(u)\psi_i^{(c)}=\mathrm{sgn}(u)R_y\phi_{i-1}^{(c)}R_x $$ 
for any $1\le i\le n+1$, any $c\in\mathbb{Q}$, and any $u\in\{x,y\}$, and hence,
$$ -\mathrm{sgn}(u)(\mathrm{RHS}~\mathrm{of}~(\ref{6}))=[R_y\phi_{i-1}^{(c^{\prime})}R_x,\partial_{n+1}^{(c)}]-[R_y\phi_n^{(c)}R_x,\partial_i^{(c^{\prime})}]. $$
The right-hand side is equal to the sum
\begin{equation}
\left.
\begin{array}{ll}
R_y\phi_{i-1}^{(c^{\prime})}[R_x,\partial_{n+1}^{(c)}]+R_y[\phi_{i-1}^{(c^{\prime})},\partial_{n+1}^{(c)}]R_x+[R_y,\partial_{n+1}^{(c)}]\phi_{i-1}^{(c^{\prime})}R_x\qquad\qquad \\
\qquad\qquad -R_y\phi_n^{(c)}[R_x,\partial_i^{(c^{\prime})}]-R_y[\phi_n^{(c)},\partial_i^{(c^{\prime})}]R_x-[R_y,\partial_i^{(c^{\prime})}]\phi_n^{(c)}R_x.
\end{array}
\right. \label{2}
\end{equation}
If $i=1$, we have $\phi_{i-1}^{(c^{\prime})}=\phi_0^{(c^{\prime})}=\mathrm{id}_{\mathfrak{H}}$, and hence, 
$$ [\phi_{i-1}^{(c^{\prime})},\partial_{n+1}^{(c)}]=[\phi_0^{(c^{\prime})},\partial_{n+1}^{(c)}]=0. $$ 
Thanks to $(\beta_n)$ and the identity $\partial_1^{(c^{\prime})}=\partial_1^{(c)}(=\partial_1)$, we also have
$$ [\phi_n^{(c)},\partial_i^{(c^{\prime})}]=[\phi_n^{(c)},\partial_1^{(c^{\prime})}]=0. $$ 
Thus, in this case, the entire expression $(\ref{2})$ turns into
\begin{equation}
-R_y\psi_{n+1}^{(c)}+\psi_{n+1}^{(c)}R_x+R_y\phi_{n}^{(c)}\psi_1^{(c^{\prime})}-\psi_1^{(c^{\prime})}\phi_{n}^{(c)}R_x \label{7}
\end{equation}
by $(\mathrm{A}_1)$ and $(\mathrm{A}_{n+1})$. Using Lemma \ref{lem1}, $\psi_1^{(c^{\prime})}=R_yR_x$ and $R_z=R_x+R_y$, we obtain the expression (\ref{7}) equals $-R_yR_z\phi_n^{(c)}R_x+R_y\phi_n^{(c)}R_zR_x$. The right-hand side becomes zero because $[R_z,\phi_n^{(c)}]=0$ by $(\beta_n)$. Thus, $(\mathrm{B}_{n+1,1}^{\prime})$ (as well as $(\mathrm{B}_{n+1,1})$) is proven. \\
In order to conclude the expression (\ref{2}) equals zero for $i$ with $1<i\le n+1$, assume that $(\mathrm{B}_{n+1,i-1})$ (hence $(\mathrm{B}_{n+1,i-1}^{\prime})$) is proven. We then obtain 
$$ [\phi_{i-1}^{(c^{\prime})},\partial_{n+1}^{(c)}]=0 $$ 
based on $(\beta_{i-1})$, $(B_{n+1,i-1})$, and $(\alpha_{n+1})$. In addition, we obtain
$$ [\phi_n^{(c)},\partial_i^{(c^{\prime})}]=0~\mathrm{for}~1<i\le n+1 $$ 
by $(\beta_n)$, $(\mathrm{B}_n)$, and $(\alpha_i)$ (when $1<i<n+1$) or by $(\beta_n)$, $(\mathrm{B}_{n+1,n})$, and $(\alpha_{n+1})$ (when $i=n+1$). Thus, in this case, the entire expression $(\ref{2})$ turns into
\begin{equation}
-R_y\phi_{i-1}^{(c^{\prime})}\psi_{n+1}^{(c)}+\psi_{n+1}^{(c)}\phi_{i-1}^{(c^{\prime})}R_x+R_y\phi_{n}^{(c)}\psi_{i}^{(c^{\prime})}-\psi_{i}^{(c^{\prime})}\phi_{n}^{(c)}R_x \label{8}
\end{equation}
by $(\mathrm{A}_i)$ and $(\mathrm{A}_{n+1})$. Using Lemma \ref{lem1} and $R_z=R_x+R_y$, we obtain the expression (\ref{8}) equals $-R_y\phi_{i-1}^{(c^{\prime})}R_z\phi_n^{(c)}R_x+R_y\phi_n^{(c)}R_z\phi_{i-1}^{(c^{\prime})}R_x$. The right-hand side becomes zero because the operators $\phi_{i-1}^{(c^{\prime})}$, $\phi_n^{(c)}$ and $R_z$ commute with one another. Thus, $(\mathrm{B}_{n+1,i}^{\prime})$ (as well as $(\mathrm{B}_{n+1,i})$) holds, and by induction, we obtain $(\mathrm{B}_{n+1})$. This concludes the proof of the proposition.
\end{proof}


According to $(\beta_n)$, $\phi_n^{(c)}$ commutes with $R_z$, and so the recursive rule $(\ref{4})$ is simplified as
\begin{equation}
\displaystyle \phi_n^{(c)}=\frac{1}{n}\bigl([\theta^{(c)},\phi_{n-1}^{(c)}]+(R_z+c\partial_1)\phi_{n-1}^{(c)}\bigl). \label{11}
\end{equation}
Masanobu Kaneko pointed out a formula for $\phi_n^{(c)}$, 
$$ \displaystyle R_z\phi_n^{(c)}=\frac{1}{n!}\mathrm{ad}(\theta^{(c)})^n(R_z). $$
This is shown by using $[\theta^{(c)},R_z]=R_{\theta (z)}+cR_z\partial_1$ and the recursive formula $(\ref{11})$. 

Using Proposition \ref{comm}, we also obtain

\begin{prop}\label{cor2}
{\it We have $\partial_n^{(c)}(\mathbb{Q}\cdot x+\mathbb{Q}\cdot y+\mathfrak{H}^0) \subset \mathfrak{H}^0$ for any integer $n\ge 1$ and any $c \in \mathbb{Q}$. }
\end{prop}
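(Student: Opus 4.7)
Since $\partial_n^{(c)}(\mathbb{Q})=0$, the proposition reduces to showing that $\partial_n^{(c)}(x)$, $\partial_n^{(c)}(y)$ and $\partial_n^{(c)}(x\mathfrak{H}y)$ all lie in $\mathfrak{H}^0=\mathbb{Q}+x\mathfrak{H}y$. My starting point is the explicit formula obtained by evaluating $(\mathrm{A}_n)$ at $w=1$:
\[
\partial_n^{(c)}(x)=\phi_{n-1}^{(c)}(x)\,y,\qquad \partial_n^{(c)}(y)=-\phi_{n-1}^{(c)}(x)\,y.
\]
Both already end in $y$, and both upgrade to elements of $x\mathfrak{H}y$ as soon as $\phi_{n-1}^{(c)}(x)\in x\mathfrak{H}$. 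For $w=u_1u_2\cdots u_m\in x\mathfrak{H}y$ with $u_1=x$, $u_m=y$, iterating $(\mathrm{A}_n)$ from the right yields the expansion
\[
\partial_n^{(c)}(w)=\partial_n^{(c)}(x)\,u_2\cdots u_m+\sum_{i=2}^{m}\mathrm{sgn}(u_i)\,\phi_{n-1}^{(c)}(xu_2\cdots u_{i-1}x)\,y\,u_{i+1}\cdots u_m.
\]
Since $u_m=y$, every summand already ends in $y$; every summand also starts with $x$ as soon as I know $\partial_n^{(c)}(x)\in x\mathfrak{H}$ and $\phi_{n-1}^{(c)}(xv)\in x\mathfrak{H}$ for all $v\in\mathfrak{H}$.

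Everything therefore reduces to the coupled assertions
\[
(\ast)_n:\ \phi_n^{(c)}(x\mathfrak{H})\subset x\mathfrak{H}\ \ (n\ge 0),\qquad (\ast\ast)_n:\ \partial_n^{(c)}(x\mathfrak{H})\subset x\mathfrak{H}\ \ (n\ge 1),
\]
which I prove by a simultaneous induction on $n$ in the order $(\ast)_0,(\ast\ast)_1,(\ast)_1,(\ast\ast)_2,(\ast)_2,\dots$. The bases $(\ast)_0$ (because $\phi_0^{(c)}=\mathrm{id}$) and $(\ast\ast)_1$ (because $\partial_1(xv)=xy\cdot v+x\partial_1(v)$ by the Leibniz rule) are immediate. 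The step $(\ast)_{n-1}\Rightarrow(\ast\ast)_n$ follows by substituting an arbitrary $xv\in x\mathfrak{H}$ into the iterated $(\mathrm{A}_n)$ expansion (now applied to words of arbitrary shape): $\partial_n^{(c)}(x)=\phi_{n-1}^{(c)}(x)y$ and every $\phi_{n-1}^{(c)}(xu_2\cdots u_{i-1}x)$ lie in $x\mathfrak{H}$ by $(\ast)_{n-1}$. The step $(\ast\ast)_1,\dots,(\ast\ast)_n\Rightarrow(\ast)_n$ uses $(\beta_n)$, which presents $\phi_n^{(c)}$ as a polynomial in the pairwise commuting operators $R_z,\partial_1^{(c)},\dots,\partial_n^{(c)}$; since $R_z$ trivially preserves $x\mathfrak{H}$ and each $\partial_i^{(c)}$ does so by $(\ast\ast)_i$, so does any composition.

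The main subtlety is ensuring that this two-step recursion remains well-founded, because $(\ast)_n$ uses $(\ast\ast)_n$ (needed since $\partial_n^{(c)}$ itself appears inside $\phi_n^{(c)}$). The observation that closes the loop is a degree count: under $\deg R_z=1$ and $\deg\partial_i^{(c)}=i$ the identity $\deg\phi_n^{(c)}=n$ forces any monomial of $\phi_n^{(c)}$ containing $\partial_n^{(c)}$ to consist of $\partial_n^{(c)}$ alone with a scalar coefficient, so its contribution $\partial_n^{(c)}(xv)$ is handled by $(\ast\ast)_n$, established at the step immediately before $(\ast)_n$. Writing down and verifying the iterated $(\mathrm{A}_n)$ identity and tracking the degree bookkeeping from $(\beta_n)$ are the main technical burdens; once in place, the induction proceeds mechanically.
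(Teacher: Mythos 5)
Your proposal is correct and follows essentially the same route as the paper: both rest on the commutation identity $\partial_n^{(c)}(wu)=\partial_n^{(c)}(w)u+\mathrm{sgn}(u)\phi_{n-1}^{(c)}(wx)y$ coming from $(\mathrm{A}_n)$ and Lemma \ref{lem1}, the polynomial presentation $(\beta_n)$ of $\phi_n^{(c)}$, and a double induction on $n$ and on word length. Your interleaved statements $(\ast)_n$ and $(\ast\ast)_n$ merely make explicit the induction hypothesis (preservation of $x\mathfrak{H}$, including words ending in $x$) that the paper compresses into ``by induction on the degree of a word,'' so this is a slightly more carefully bookkept version of the paper's own argument rather than a different one.
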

\begin{proof}
By Lemma \ref{lem1} and $(\mathrm{A}_n)$ in the proof of Proposition \ref{comm}, we have
\begin{equation}
\partial_n^{(c)}(wu)=\partial_n^{(c)}(w)u+\mathrm{sgn}(u)\phi_{n-1}^{(c)}(wx)y\quad(w\in\mathfrak{H},u\in\{x,y\}). \label{13}
\end{equation}
This implies
\begin{equation}
\partial_n^{(c)}(\mathbb{Q}\cdot x+\mathfrak{H}^1) \subset \mathfrak{H}^1. \label{17}
\end{equation}
Next, the proposition is shown by induction on $n$. The proposition holds for $n=1$ because $\partial_1^{(c)}=\partial_1$. Assume that the proposition is proven for $n-1$. Using equation (\ref{13}), $(\beta_{n-1})$, and $\partial_n^{(c)}(1)=0$, by induction on the degree of a word, we find that both $\partial_n^{(c)}(x)$ and $\partial_n^{(c)}(xwy)$ for any words $w\in\mathfrak{H}$ begin with the letter $x$ (hence, using (\ref{17}), $\partial_n^{(c)}(x),\partial_n^{(c)}(xwy)\in\mathfrak{H}^0$). In addition, because of ($\alpha_n$), we have $\partial_n^{(c)}(z)=\partial_n^{(c)}R_z(1)=R_z\partial_n^{(c)}(1)=0$ where $z=x+y$, and hence, we have $\partial_n^{(c)}(y)=-\partial_n^{(c)}(x)\in\mathfrak{H}^0$. Therefore, the proposition is proven for $n$.
\end{proof}

\section{Proof of Key Proposition}\label{sec3}

\noindent In this section, the proof of Key Proposition \ref{keyprop} is given. 

Denote by $\mathfrak{H}_n^1$ the weight $n$ homogenous part of $\mathfrak{H}^1$. Recall that $z_k=x^{k-1}y$ for $k\ge 1$ as defined in $\S \ref{sec2}$. Let $\mathfrak{W}$ be the $\mathbb{Q}$-vector space generated by $\{\mathcal{H}_w|w\in\mathfrak{H}^1\}$, and $\mathfrak{W}_n$ the vector subspace of $\mathfrak{W}$ generated by $\{\mathcal{H}_w|w\in\mathfrak{H}^1_n\}$. Let $\mathfrak{W}^{\prime}$ be the $\mathbb{Q}$-vector space generated by $\{L_{z_k}\mathcal{H}_w|k\ge 1,~w\in\mathfrak{H}^1\}$, and $\mathfrak{W}^{\prime}_n$ the vector subspace of $\mathfrak{W}^{\prime}$ generated by $\{L_{z_k}\mathcal{H}_w|1\le k\le n,~w\in\mathfrak{H}^1_{n-k}\}$. The $\mathbb{Q}$-linear map $ \lambda : \mathfrak{W}^{\prime}\to\mathfrak{W}$ is defined by $\lambda(L_{z_k}\mathcal{H}_w)=\mathcal{H}_{z_kw}$. 

\begin{rem}
Here, we show the well-definedness of the map $\lambda$. Assume that
\begin{equation}
\displaystyle \sum_{(z_k,w)}C_{(z_k,w)}L_{z_k}\mathcal{H}_w=0~~(\in\mathfrak{W}), \label{20}
\end{equation}
where the sum is over different pairs of words $(z_k,w)$. Applying $(\ref{20})$ to $1\in\mathfrak{H}$, we have
$$ \displaystyle \sum_{(z_k,w)}C_{(z_k,w)}z_kw=0. $$
Then, for each $z_k$, we have
$$ \displaystyle \sum_wC_{(z_k,w)}w=0 $$
where the sum is over different words $w$. Therefore, each coefficient $C_{(z_k,w)}$ becomes zero, and hence, $L_{z_k}\mathcal{H}_w$'s are linearly independent. 
\end{rem}

Recall that $\varepsilon\in\mathrm{Aut}(\mathfrak{H})$ has been defined by $\varepsilon(x)=x+y,~\varepsilon(y)=-y$, and the anti-automorphism $\tau$ on $\mathfrak{H}$ by $\tau(x)=y,~\tau(y)=x$. Then, we have

\begin{prop}\label{prop2}
{\it Let $n$ be a positive integer. Then the following two statements, $(\mathrm{C}_n)$ and $(\mathrm{D}_n)$ hold. 
\begin{eqnarray*}
&(\mathrm{C}_n)&\quad \varepsilon\tau\phi_{n-1}^{(c)}R_x\tau\varepsilon\in\mathfrak{W}^{\prime}_n. \\
&(\mathrm{D}_n)&\quad \varepsilon\tau R_y^{-1}\partial_n^{(c)}R_y\tau\varepsilon = -\lambda(\varepsilon\tau\phi_{n-1}^{(c)}R_x\tau\varepsilon)\in \mathfrak{W}_n ~\mathit{on}~\mathfrak{H}^1.
\end{eqnarray*}
}
\end{prop}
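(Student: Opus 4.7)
The plan is to prove $(\mathrm{C}_n)$ and $(\mathrm{D}_n)$ by a simultaneous induction on $n$, because the two statements are tightly coupled through the operator identity
\begin{equation*}
R_y^{-1}\partial_n^{(c)}R_y = \partial_n^{(c)} - \phi_{n-1}^{(c)}R_x
\end{equation*}
on $\mathfrak{H}$, which is immediate from $(\mathrm{A}_n)$ together with Lemma \ref{lem1}. After conjugation by $\tau\varepsilon$, statement $(\mathrm{D}_n)$ becomes the equivalent identity
\begin{equation*}
\varepsilon\tau\partial_n^{(c)}\tau\varepsilon = \varepsilon\tau\phi_{n-1}^{(c)}R_x\tau\varepsilon - \lambda\bigl(\varepsilon\tau\phi_{n-1}^{(c)}R_x\tau\varepsilon\bigr)
\end{equation*}
on $\mathfrak{H}^1$, so once $(\mathrm{C}_n)$ is in hand, $(\mathrm{D}_n)$ reduces to a match between two concrete operators within a fixed subspace.

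For the base case $n=1$, I would first record the elementary conjugation formulas $\varepsilon\tau R_x\tau\varepsilon = -L_y$, $\varepsilon\tau R_y\tau\varepsilon = L_z$, and $\varepsilon\tau R_z\tau\varepsilon = L_x$, which follow from $\tau\varepsilon$ being an anti-automorphism with $\tau\varepsilon(x)=z$ and $\tau\varepsilon(y)=-x$. Since $\phi_0^{(c)} = \mathrm{id}$, the first formula gives $\varepsilon\tau\phi_0^{(c)}R_x\tau\varepsilon = -L_y = -L_{z_1}\mathcal{H}_1 \in \mathfrak{W}^{\prime}_1$, establishing $(\mathrm{C}_1)$. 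Then $(\mathrm{D}_1)$ reduces to the supplementary identity $\varepsilon\tau\partial_1\tau\varepsilon = \mathcal{H}_y - L_y$ on $\mathfrak{H}^1$, which I would verify by a direct word-by-word computation using the Leibniz rule for $\partial_1$ and the recursive definition of $\ast$.

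For the inductive step, the simplified recursion $\phi_n^{(c)} = \tfrac{1}{n}\bigl([\theta^{(c)},\phi_{n-1}^{(c)}] + (R_z + c\partial_1)\phi_{n-1}^{(c)}\bigr)$ --- valid by $(\beta_n)$ and by the commutativity of $\partial_1$ with $\phi_{n-1}^{(c)}$ --- splits $(\mathrm{C}_{n+1})$ into two pieces. The $(R_z + c\partial_1)$-piece becomes, after conjugation, the operator $L_x - cL_y + c\mathcal{H}_y$ composed with $\varepsilon\tau\phi_{n-1}^{(c)}R_x\tau\varepsilon \in \mathfrak{W}^{\prime}_n$; using $L_xL_{z_k}\mathcal{H}_w = L_{z_{k+1}}\mathcal{H}_w$ together with the harmonic product rule ii), the $L_{yz_k}\mathcal{H}_w$-contributions coming from $-cL_y$ and $c\mathcal{H}_y$ cancel, leaving the result in $\mathfrak{W}^{\prime}_{n+1}$. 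In parallel, the recursion $\partial_{n+1}^{(c)} = \tfrac{1}{n}[\theta^{(c)},\partial_n^{(c)}]$ handles the $(\mathrm{D})$-side by the analogous decomposition, matched term by term through $\lambda$ using $(\mathrm{D}_n)$.

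The main obstacle will be the commutator term $\varepsilon\tau[\theta^{(c)},\phi_{n-1}^{(c)}]R_x\tau\varepsilon$. Because $\theta^{(c)}$ fails to be a derivation when $c\neq 0$ --- its Leibniz defect is exactly the correction $c\partial_1(w)H(w^{\prime})$ from (\ref{19}) --- the conjugate $\varepsilon\tau\theta^{(c)}\tau\varepsilon$ admits no simple closed form on all of $\mathfrak{H}$. My plan for this step is to isolate, as a separate key lemma, a description of how $\varepsilon\tau\theta^{(c)}\tau\varepsilon$ acts on the subspace $\mathfrak{W}^{\prime}$: concretely, I expect the commutator $[\varepsilon\tau\theta^{(c)}\tau\varepsilon,\,\cdot\,]$ to send $\mathfrak{W}^{\prime}_n$ into $\mathfrak{W}^{\prime}_{n+1}$ via a rule expressible in terms of $L_{z_k}$'s, $\mathcal{H}_w$'s, and $\partial_1$, with a compatibility under $\lambda$ that exactly matches the analogous commutator arising in the $\partial_{n+1}^{(c)}$-recursion. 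Once this key lemma is established, $(\mathrm{C}_{n+1})$ and $(\mathrm{D}_{n+1})$ follow by combining the two recursions with the inductive hypotheses.
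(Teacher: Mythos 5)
Your setup is the same as the paper's: a simultaneous induction on $n$ driven by the identity $R_y^{-1}\partial_n^{(c)}R_y=\partial_n^{(c)}-\phi_{n-1}^{(c)}R_x$, the conjugation formulas for $\varepsilon\tau$, and the base case $\varepsilon\tau\phi_0^{(c)}R_x\tau\varepsilon=-L_y$. But the inductive engine you propose has a genuine hole, and you have located it yourself: the term $\varepsilon\tau[\theta^{(c)},\phi_{n-1}^{(c)}]R_x\tau\varepsilon$ (and its counterpart for $\partial_{n+1}^{(c)}=\frac1n[\theta^{(c)},\partial_n^{(c)}]$). Everything in your plan up to that point is routine, and the cancellation you describe for the $(R_z+c\partial_1)$-piece does work out (it is essentially Lemma \ref{lem3}), but the "key lemma" describing how $[\varepsilon\tau\theta^{(c)}\tau\varepsilon,\cdot]$ acts on $\mathfrak{W}^{\prime}$ is not stated precisely, let alone proved, and it carries the entire weight of both inductive steps. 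Since $\theta^{(c)}$ is not a derivation for $c\neq0$, its conjugate has no tractable form, and there is no reason to expect such a lemma to be provable without essentially redoing the work by other means. As written, the proposal is a correct reduction of the problem to its hardest part, not a proof.

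The paper's proof is organized precisely so that $\theta^{(c)}$ never has to be conjugated. For $(\mathrm{C}_n)\Rightarrow(\mathrm{D}_n)$ it uses a rigidity argument: an element of $\mathfrak{W}$ is determined by its value at $1$ and its commutators with all $L_{z_k}$ (Lemma \ref{lem6}), so one computes $[\varepsilon\tau R_y^{-1}\partial_n^{(c)}R_y\tau\varepsilon,L_{z_k}]$ directly from $(\mathrm{A}_n)$, $(\alpha_n)$, Lemma \ref{lem1} and the formulas (\ref{15}), and matches it with $[\lambda(-\varepsilon\tau\phi_{n-1}^{(c)}R_x\tau\varepsilon),L_{z_k}]$ via Lemma \ref{lem2}; no recursion in $\theta^{(c)}$ enters. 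For $(\mathrm{D}_n)\Rightarrow(\mathrm{C}_{n+1})$ it replaces the $\theta^{(c)}$-recursion for $\phi_n^{(c)}$ entirely by the structural fact $(\beta_n)$, namely $\phi_n^{(c)}\in\mathbb{Q}[R_z,\partial_1^{(c)},\ldots,\partial_n^{(c)}]_{(n)}$, so that after conjugation one only needs $\varepsilon\tau\partial_j^{(c)}\tau\varepsilon\in(\lambda-1)(\mathfrak{W}^{\prime}_j)$ (which is $(\mathrm{D}_j)$ restated) together with the closure properties of $(\lambda-1)(\mathfrak{W}^{\prime})$ under products and right multiplication by $L_{z_l}$ (Lemmas \ref{lem4} and \ref{lem3}). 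If you want to complete your argument, the realistic fix is to abandon the $\theta^{(c)}$-recursion at the inductive step and adopt these two devices; the commutator-with-$L_{z_k}$ characterization in particular is the idea your plan is missing.
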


By (\ref{17}), the expression $R_y^{-1}$ in $(\mathrm{D}_n)$ has a well-defined meaning. According to $(\mathrm{D}_n)$, there exists an element $w\in\mathfrak{H}y$ such that
\begin{equation}
\varepsilon\tau R_y^{-1}\partial_n^{(c)}R_y\tau\varepsilon = \mathcal{H}_w, \label{3}
\end{equation}
which is equivalent to Key Proposition \ref{keyprop} in $\S \ref{sec2}$ because of $R_y\tau =\tau L_x$. Therefore, Proposition \ref{prop2} is proven instead of Key Proposition \ref{keyprop}.
\begin{rem}
Here, note that $w$ in $(\ref{3})$ can be determined as follows. Equation $(\ref{3})$ holds on $\mathfrak{H}^1$, and so also hold on $\mathbb{Q}$. Since $\partial_n^{(c)}(y)\in\mathfrak{H}^0$ by Proposition $\ref{cor2}$, $R_y^{-1}\partial_n^{(c)}(y)\in x\mathfrak{H}$. Hence,
$$ \varepsilon\tau R_y^{-1}\partial_n^{(c)}R_y\tau\varepsilon (1)=\varepsilon\tau R_y^{-1}\partial_n^{(c)}(y)\in\varepsilon\tau (x\mathfrak{H})=\varepsilon (\mathfrak{H}y)=\mathfrak{H}y. $$
On the other hand, $\mathcal{H}_w(1)=w$. Therefore, by $(\ref{3})$, $w=\varepsilon\tau R_y^{-1}\partial_n^{(c)}(y)~(\in\mathfrak{H}y)$. 
\end{rem}

For the proof of Proposition \ref{prop2}, following lemmata are needed.

\begin{lem}\label{lem2}
{\it For any $X\in\mathfrak{W}^{\prime}$ and any $l\ge 1$, we have $[\lambda(X),L_{z_l}]=XL_{z_l}+L_{x^l}X$. }
\end{lem}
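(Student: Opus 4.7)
The plan is to reduce to generators and then unwind the harmonic product recursion. Since both sides of the claimed identity depend $\mathbb{Q}$-linearly on $X$, it suffices to verify the lemma when $X$ is a generator of $\mathfrak{W}^{\prime}$, that is, when $X=L_{z_k}\mathcal{H}_w$ for some $k\ge 1$ and some $w\in\mathfrak{H}^1$. In that case $\lambda(X)=\mathcal{H}_{z_kw}$, so what must be shown is
\[
[\mathcal{H}_{z_kw},L_{z_l}]=L_{z_k}\mathcal{H}_wL_{z_l}+L_{x^l}L_{z_k}\mathcal{H}_w
\]
as operators on $\mathfrak{H}^1$.

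Next I would verify this identity by applying both sides to an arbitrary $w^{\prime}\in\mathfrak{H}^1$. The left-hand side unfolds as $(z_kw)\ast(z_lw^{\prime})-z_l\bigl((z_kw)\ast w^{\prime}\bigr)$, and the defining recursion
\[
z_kw\ast z_lw^{\prime}=z_k(w\ast z_lw^{\prime})+z_l(z_kw\ast w^{\prime})+z_{k+l}(w\ast w^{\prime})
\]
from $\S\ref{sec2}$ cancels the $z_l(z_kw\ast w^{\prime})$ term and leaves $z_k(w\ast z_lw^{\prime})+z_{k+l}(w\ast w^{\prime})$. The first summand is precisely $L_{z_k}\mathcal{H}_wL_{z_l}(w^{\prime})$. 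For the second, I would use the factorisation $z_{k+l}=x^{k+l-1}y=x^l\cdot x^{k-1}y=x^lz_k$, so that $L_{z_{k+l}}=L_{x^l}L_{z_k}$, giving $z_{k+l}(w\ast w^{\prime})=L_{x^l}L_{z_k}\mathcal{H}_w(w^{\prime})$. The two right-hand sides then match term by term.

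One small point to check is that the recursion also applies correctly when $w^{\prime}=1$, but the rule in $\S\ref{sec2}$ covers this via the unit axiom $w\ast 1=w$, so nothing extra is needed there. Overall there is no real obstacle: the lemma is a direct consequence of the harmonic product recursion together with the observation $z_{k+l}=x^lz_k$, and the main content is really just bookkeeping of which summand of the recursion corresponds to which term on the right-hand side of the claim.
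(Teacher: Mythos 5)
Your proposal is correct and follows the same route as the paper: reduce to generators $X=L_{z_k}\mathcal{H}_w$ and invoke the harmonic product recursion, which the paper records as the identity $[\mathcal{H}_{z_kw},L_{z_l}]=L_{z_k}\mathcal{H}_wL_{z_l}+L_{z_{k+l}}\mathcal{H}_w$; your observation $L_{x^l}L_{z_k}=L_{z_{k+l}}$ is exactly how the two statements match. You merely spell out the term-by-term cancellation that the paper leaves implicit.
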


\begin{proof}
It is sufficient to show the case in which $X=L_{z_k}\mathcal{H}_w$, which follows directly from 
\begin{equation}
[\mathcal{H}_{z_kw},L_{z_l}]=L_{z_k}\mathcal{H}_wL_{z_l}+L_{z_{k+l}}\mathcal{H}_w, \label{16}
\end{equation}
the harmonic product rule.
\end{proof}

\begin{lem}\label{lem3}
{\it For any $k,l\ge 1$, we have $(\lambda -1)(\mathfrak{W}^{\prime}_k)L_{z_l}\subset \mathfrak{W}^{\prime}_{k+l}$. }
\end{lem}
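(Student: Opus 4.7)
The plan is to reduce directly to Lemma \ref{lem2}, since that lemma already encodes the interaction between $\lambda$ and the right multiplication by $L_{z_l}$. Because $\mathfrak{W}'_k$ is by definition $\mathbb{Q}$-spanned by generators of the form $X = L_{z_i}\mathcal{H}_w$ with $1\le i\le k$ and $w\in\mathfrak{H}^1_{k-i}$, by $\mathbb{Q}$-linearity it is enough to check the inclusion on such an $X$.

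For such $X$, I would compute
$$ (\lambda-1)(X)L_{z_l} = \lambda(X)L_{z_l} - XL_{z_l}. $$
Applying Lemma \ref{lem2}, which gives $\lambda(X)L_{z_l} - L_{z_l}\lambda(X) = XL_{z_l} + L_{x^l}X$, this simplifies to
$$ (\lambda-1)(X)L_{z_l} = L_{z_l}\lambda(X) + L_{x^l}X = L_{z_l}\mathcal{H}_{z_iw} + L_{x^l z_i}\mathcal{H}_w. $$

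It then remains to recognize both summands as elements of $\mathfrak{W}'_{k+l}$ in the generating form used to define it. The first term is of the shape $L_{z_l}\mathcal{H}_{w'}$ with $w' = z_iw \in \mathfrak{H}^1_k = \mathfrak{H}^1_{(k+l)-l}$, so it already fits one of the generators of $\mathfrak{W}'_{k+l}$. For the second term I would use the identity $x^l z_i = x^l x^{i-1} y = z_{l+i}$ to rewrite $L_{x^l z_i}\mathcal{H}_w = L_{z_{l+i}}\mathcal{H}_w$; here $1\le l+i \le k+l$ and $w \in \mathfrak{H}^1_{k-i} = \mathfrak{H}^1_{(k+l)-(l+i)}$, so this is also one of the declared generators of $\mathfrak{W}'_{k+l}$.

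There is no real obstacle here beyond bookkeeping of the weight indices: the content of the lemma is already packaged in Lemma \ref{lem2}, and the present statement is essentially its weight-graded refinement. The only small point requiring attention is the identity $x^l z_i = z_{l+i}$, which ensures that the second summand lies in the specified generating set for $\mathfrak{W}'_{k+l}$ rather than in some strictly larger space of operators built from products $L_{x^a}L_{z_b}$.
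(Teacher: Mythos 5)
Your proof is correct and follows essentially the same route as the paper: the paper's one-line proof invokes the harmonic product identity $[\mathcal{H}_{z_kw},L_{z_l}]=L_{z_k}\mathcal{H}_wL_{z_l}+L_{z_{k+l}}\mathcal{H}_w$ directly, which is exactly the content of Lemma \ref{lem2} that you use, and the resulting decomposition $L_{z_l}\mathcal{H}_{z_iw}+L_{z_{l+i}}\mathcal{H}_w$ with the weight bookkeeping is the intended argument.
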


\begin{proof}
The proof follows directly from (\ref{16}). 
\end{proof}

\begin{lem}\label{lem4}
{\it We have $(\lambda -1)(\mathfrak{W}^{\prime}_k)\cdot(\lambda -1)(\mathfrak{W}^{\prime}_l)\subset(\lambda -1)(\mathfrak{W}^{\prime}_{k+l})$ for any $k,l\ge 1$. }
\end{lem}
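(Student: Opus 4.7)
The plan is to verify the inclusion on generators and read off, from the harmonic product rule, the natural ansatz in $\mathfrak{W}'_{k+l}$ whose $(\lambda-1)$-image matches the required product. Fix generators $X = L_{z_a}\mathcal{H}_u \in \mathfrak{W}'_k$ (with $1 \le a \le k$ and $u \in \mathfrak{H}^1_{k-a}$) and $Y = L_{z_b}\mathcal{H}_v \in \mathfrak{W}'_l$, and expand
\[
(\lambda - 1)(X)(\lambda - 1)(Y) = \lambda(X)\lambda(Y) - X\lambda(Y) - \lambda(X)Y + XY.
\]
The task is to rewrite this four-term sum as $(\lambda-1)$ applied to some element of $\mathfrak{W}'_{k+l}$.

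The ansatz is dictated by the harmonic product rule $z_a u \ast z_b v = z_a(u \ast z_b v) + z_b(z_a u \ast v) + z_{a+b}(u \ast v)$: set $Z_1 := L_{z_a}\mathcal{H}_{u \ast z_b v}$, $Z_2 := L_{z_b}\mathcal{H}_{z_a u \ast v}$, $Z_3 := L_{z_{a+b}}\mathcal{H}_{u \ast v}$, and a quick weight count places all three in $\mathfrak{W}'_{k+l}$. Since $\mathcal{H}_w\mathcal{H}_{w'} = \mathcal{H}_{w \ast w'}$, the harmonic product rule translates directly into $\lambda(X)\lambda(Y) = \lambda(Z_1 + Z_2 + Z_3)$, while $X\lambda(Y) = L_{z_a}\mathcal{H}_{u \ast z_b v} = Z_1$ is immediate.

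The remaining mixed term $\lambda(X)Y - XY$ is where Lemma \ref{lem2} enters: applied to $X \in \mathfrak{W}'$ with parameter $b$, it rearranges to $\lambda(X)L_{z_b} - XL_{z_b} = L_{z_b}\lambda(X) + L_{x^b}X$, and right-multiplying by $\mathcal{H}_v$ and using $x^b z_a = z_{a+b}$ produces exactly $Z_2 + Z_3$. Combining the three evaluations, everything collapses to
\[
(\lambda - 1)(X)(\lambda - 1)(Y) = \lambda(Z_1 + Z_2 + Z_3) - Z_1 - (Z_2 + Z_3) = (\lambda-1)(Z_1 + Z_2 + Z_3),
\]
which lies in $(\lambda-1)(\mathfrak{W}'_{k+l})$ as required.

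The main obstacle is recognizing the ansatz in the first place: the same three summands $Z_1, Z_2, Z_3$ have to appear from two independent sources—the triple harmonic expansion of $\lambda(X)\lambda(Y)$ and the Lemma \ref{lem2} expansion of $\lambda(X)Y - XY$—and one has to see that the lone $Z_1$ coming from $X\lambda(Y)$ and the $Z_2 + Z_3$ coming from the mixed term reassemble into $Z_1 + Z_2 + Z_3$ so as to cancel precisely the "$-1$" part of $\lambda - 1$. Once this matching is spotted, every remaining step is forced either by the definition of $\lambda$, by the multiplicativity $\mathcal{H}_w\mathcal{H}_{w'} = \mathcal{H}_{w \ast w'}$, or by Lemma \ref{lem2}, so no further technical difficulty arises.
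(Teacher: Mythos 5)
Your proof is correct and follows essentially the same route as the paper: expand the product of the two $(\lambda-1)$-images into four terms, handle $\lambda(X)\lambda(Y)$ via the harmonic product rule, and handle the mixed term via the commutator identity of Lemma \ref{lem2} (identity (\ref{16})), which reassembles everything into $(\lambda-1)(Z_1+Z_2+Z_3)$. The paper's version is the same computation written as one chain of equalities with $Z_1=L_{z_k}\mathcal{H}_{w\ast z_lw'}$, $Z_2=L_{z_l}\mathcal{H}_{z_kw\ast w'}$, $Z_3=L_{z_{k+l}}\mathcal{H}_{w\ast w'}$, so nothing further is needed.
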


\begin{proof} 
Let $d$ and $d^{\prime}$ be the weights of words $w$ and $w^{\prime}$, respectively. The assertion $(\lambda -1)(L_{z_k}\mathcal{H}_w)\cdot(\lambda -1)(L_{z_l}\mathcal{H}_{w^{\prime}})\in (\lambda -1)(\mathfrak{W}^{\prime}_{k+l+d+d^{\prime}})$ is only necessary to show.
\begin{eqnarray*}
\text{LHS} &=& (\mathcal{H}_{z_kw}-L_{z_k}\mathcal{H}_w)(\mathcal{H}_{z_lw^{\prime}}-L_{z_l}\mathcal{H}_{w^{\prime}}) \\
&=& \mathcal{H}_{z_kw\ast z_lw^{\prime}}-\mathcal{H}_{z_kw}L_{z_l}\mathcal{H}_{w^{\prime}}-L_{z_k}\mathcal{H}_{w\ast z_lw^{\prime}}+L_{z_k}\mathcal{H}_wL_{z_l}\mathcal{H}_{w^{\prime}} \\
&=& \mathcal{H}_{z_k(w\ast z_lw^{\prime})+z_l(z_kw\ast w^{\prime})+z_{k+l}(w\ast w^{\prime})}-(L_{z_k}\mathcal{H}_wL_{z_l} \\
&{}& \quad +L_{z_l}\mathcal{H}_{z_kw}+L_{z_{k+l}}\mathcal{H}_w)\mathcal{H}_{w^{\prime}}-L_{z_k}\mathcal{H}_{w\ast z_lw^{\prime}}+L_{z_k}\mathcal{H}_wL_{z_l}\mathcal{H}_{w^{\prime}} \\
&=& \mathcal{H}_{z_k(w\ast z_lw^{\prime})}-L_{z_k}\mathcal{H}_{w\ast z_lw^{\prime}}+\mathcal{H}_{z_l(z_kw\ast w^{\prime})}-L_{z_l}\mathcal{H}_{z_kw\ast w^{\prime}} \\
&{}& \quad +\mathcal{H}_{z_{k+l}(w\ast w^{\prime})}-L_{z_{k+l}}\mathcal{H}_{w\ast w^{\prime}} \\
&=& (\lambda -1)(L_{z_k}\mathcal{H}_{w\ast z_lw^{\prime}}+L_{z_l}\mathcal{H}_{z_kw\ast w^{\prime}}+L_{z_{k+l}}\mathcal{H}_{w\ast w^{\prime}}). \\
&\in& \mathrm{RHS}.
\end{eqnarray*}
Hence, the lemma is proven. 
\end{proof}

\begin{lem}\label{lem5}
{\it For any $X\in\mathfrak{W}^{\prime}$, we have $\lambda(X)(1)=X(1)$.}
\end{lem}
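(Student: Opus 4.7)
The plan is to reduce to generators by linearity and then perform a direct computation using the definitions of $\lambda$, $L_{z_k}$, $\mathcal{H}_w$, and the unit axiom of the harmonic product.

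Since $\mathfrak{W}^{\prime}$ is spanned by elements of the form $L_{z_k}\mathcal{H}_w$ with $k\ge 1$ and $w\in\mathfrak{H}^1$, and since both $X\mapsto \lambda(X)(1)$ and $X\mapsto X(1)$ are $\mathbb{Q}$-linear maps from $\mathfrak{W}^{\prime}$ to $\mathfrak{H}^1$, it suffices to verify the identity on a single generator $X=L_{z_k}\mathcal{H}_w$. On the one hand, by the definition of $\lambda$,
\[
\lambda(X)(1)=\mathcal{H}_{z_kw}(1)=(z_kw)\ast 1 = z_kw,
\]
where the last equality uses rule i) of the harmonic product, namely $u\ast 1=u$ for all $u\in\mathfrak{H}^1$. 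On the other hand,
\[
X(1)=L_{z_k}\mathcal{H}_w(1)=L_{z_k}(w\ast 1)=L_{z_k}(w)=z_kw,
\]
again by rule i). Comparing these two expressions yields $\lambda(X)(1)=X(1)$, and extending by linearity completes the argument.

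There is no real obstacle here: the well-definedness of $\lambda$ has already been established in the preceding remark, so the generators $L_{z_k}\mathcal{H}_w$ are linearly independent, and the lemma is an immediate consequence of the unit axiom of $\ast$ together with the definitions.
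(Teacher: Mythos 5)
Your proof is correct and follows the same route as the paper's one-line argument: reduce to a generator $L_{z_k}\mathcal{H}_w$ by linearity and compute $\mathcal{H}_{z_kw}(1)=z_kw=L_{z_k}\mathcal{H}_w(1)$ using the unit rule of $\ast$. You have merely written out explicitly what the paper compresses into a single displayed computation.
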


\begin{proof}
$(\lambda-1)(L_{z_k}\mathcal{H}_w)(1)=\mathcal{H}_{z_kw}(1)-L_{z_k}\mathcal{H}_w(1)=z_kw-z_kw=0.$
\end{proof}

\begin{lem}\label{lem6}
{\it Let $X\in\mathfrak{W}$. If $X(1)=0$ and $[X,L_{z_k}]=0$ for any $k\ge 1$, we have $X=0$.}
\end{lem}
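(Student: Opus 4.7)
My plan is to proceed by direct analogy with Lemma \ref{lem9}, with the left multiplications $L_{z_k}$ playing the role that $R_x$ and $R_y$ played there, and with $\mathfrak{H}^1$ replacing $\mathfrak{H}$. The key observation I would use is that $\mathfrak{H}^1 = \mathbb{Q}\oplus\mathfrak{H}y$, so every word in $\mathfrak{H}^1$ other than $1$ ends in $y$. Grouping each $y$ with the preceding maximal block of $x$'s, such a word admits a unique factorization of the form $z_{k_1}z_{k_2}\cdots z_{k_n}$ with $z_k=x^{k-1}y$; these products, together with $1$, form a $\mathbb{Q}$-basis of $\mathfrak{H}^1$, which is the space on which every $\mathcal{H}_w$, and therefore every element of $\mathfrak{W}$, acts.

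Given the two hypotheses, iterating $[X,L_{z_{k_i}}]=0$ for $i=1,\ldots,n$ yields
$$ X(z_{k_1}z_{k_2}\cdots z_{k_n}) = XL_{z_{k_1}}L_{z_{k_2}}\cdots L_{z_{k_n}}(1) = L_{z_{k_1}}L_{z_{k_2}}\cdots L_{z_{k_n}}X(1) = 0, $$
where the last equality invokes $X(1)=0$. Combined with $X(1)=0$ itself, this shows that $X$ annihilates every basis element of $\mathfrak{H}^1$, hence vanishes as an operator on $\mathfrak{H}^1$. Since $\mathfrak{W}$ is by definition a space of such operators (and the map $w\mapsto\mathcal{H}_w$ is injective, as $\mathcal{H}_w(1)=w$), this forces $X=0$ in $\mathfrak{W}$.

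I do not anticipate any essential obstacle: the whole argument reduces to the observation that $\mathfrak{H}^1$ is generated from $1$ by iterated left multiplication by the $z_k$'s, after which the commuting-operator manipulation is purely formal. The only mildly subtle point, if any, is ensuring that one works with the right basis description of $\mathfrak{H}^1$; once this is in hand, the proof is a one-line analogue of Lemma \ref{lem9}.
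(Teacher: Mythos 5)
Your proposal is correct and is essentially the same argument as the paper's: iterate $[X,L_{z_{k_i}}]=0$ to reduce $X(z_{k_1}\cdots z_{k_n})$ to $z_{k_1}\cdots z_{k_n}X(1)=0$, using that the words $z_{k_1}\cdots z_{k_n}$ together with $1$ span $\mathfrak{H}^1$. The extra remarks about the basis of $\mathfrak{H}^1$ and injectivity of $w\mapsto\mathcal{H}_w$ are fine but not needed beyond what the paper records.
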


\begin{proof}
If $[X,L_{z_k}]=0$ for any $k\ge 1$,
$$ X(z_{k_1}\cdots z_{k_n})=z_{k_1}X(z_{k_2}\cdots z_{k_n}) = \cdots = z_{k_1}\cdots z_{k_n}X(1)=0. $$
\end{proof}

Using their validity and various properties obtained in the proof of Proposition \ref{comm}, Proposition \ref{prop2} can be shown as follows. 

\begin{proof}
In the following, ($\mathrm{C}_n$) and ($\mathrm{D}_n$) are proven inductively as $(\mathrm{C}_1)\Rightarrow(\mathrm{D}_1)\Rightarrow(\mathrm{C}_2)\Rightarrow(\mathrm{D}_2)\Rightarrow(\mathrm{C}_3)\Rightarrow\cdots$. 

Since $\varepsilon\tau\phi_0^{(c)}R_x\tau\varepsilon = -L_y\in\mathfrak{W}_1^{\prime}$, the claim $(\mathrm{C}_1)$ holds. 

Assume that $(\mathrm{C}_n)$ is proven. Note that we have the equality
\begin{equation}
R_y^{-1}\partial_n^{(c)}R_y=\partial_n^{(c)}-\phi_{n-1}^{(c)}R_x \label{14}
\end{equation}
based on $(\mathrm{A}_n)$, Lemma \ref{lem1}, and Proposition \ref{cor2}. Then, we obtain
\begin{eqnarray*}
&{}& [\varepsilon\tau R_y^{-1}\partial_n^{(c)}R_y\tau\varepsilon, L_{z_k}] = \varepsilon\tau R_y^{-1}\partial_n^{(c)}R_y\tau\varepsilon L_{z_k} - L_{z_k}\varepsilon\tau R_y^{-1}\partial_n^{(c)}R_y\tau\varepsilon \\
&{}& =\varepsilon\tau\partial_n^{(c)}\tau\varepsilon L_{z_k} -\varepsilon\tau\phi_{n-1}^{(c)}R_x\tau\varepsilon L_{z_k} - L_{z_k}\varepsilon\tau\partial_n^{(c)}\tau\varepsilon +L_{z_k}\varepsilon\tau\phi_{n-1}^{(c)}R_x\tau\varepsilon . \label{hosi}
\end{eqnarray*}
Note that
\begin{equation}
\varepsilon L_x=L_z\varepsilon ,~\varepsilon L_y=-L_y\varepsilon ,~\tau L_x=R_y\tau ,~\tau L_y=R_x\tau . \label{15}
\end{equation}
Using (\ref{15}), the first term of the expression (\ref{hosi}) turns into $-\varepsilon\tau\partial_n^{(c)}R_{z^{k-1}}R_x\tau\varepsilon$. According to $(\mathrm{A}_n)$, $(\alpha_n)$, and Lemma \ref{lem1}, 
$$ -\varepsilon\tau\partial_n^{(c)}R_{z^{k-1}}R_x\tau\varepsilon = -\varepsilon\tau R_{z^{k-1}}(R_x\partial_n^{(c)}+R_y\phi_{n-1}^{(c)}R_x)\tau\varepsilon . $$
Again apply (\ref{15}). Then, two terms cancel and two others combine to the second term on the right in the statement below it. 
$$ [\varepsilon\tau R_y^{-1}\partial_n^{(c)}R_y\tau\varepsilon, L_{z_k}] = -\varepsilon\tau\phi_{n-1}^{(c)}R_x\tau\varepsilon L_{z_k}-L_{x^k}\varepsilon\tau\phi_{n-1}^{(c)}R_x\tau\varepsilon. $$
This is equal to $[\lambda(-\varepsilon\tau\phi_{n-1}^{(c)}R_x\tau\varepsilon),L_{z_k}]$ by Lemma \ref{lem2} and $(\mathrm{C}_n)$. Moreover,
$$ \varepsilon\tau R_y^{-1}\partial_n^{(c)}R_y\tau\varepsilon (1) = \varepsilon (\tau\partial_n^{(c)}\tau -\tau\phi_{n-1}^{(c)}R_x\tau)\varepsilon (1) = -\varepsilon\tau \phi_{n-1}^{(c)}R_x\tau\varepsilon (1) $$
owing to (\ref{14}) and $\partial_n^{(c)}(1)=0$. By Lemma \ref{lem5}, this equals $-\lambda(\varepsilon\tau \phi_{n-1}^{(c)}R_x\tau\varepsilon) (1)$. Hence, by Lemma \ref{lem6}, we have $(\mathrm{D}_n)$: $\varepsilon\tau R_y^{-1}\partial_n^{(c)}R_y\tau\varepsilon = -\lambda(\varepsilon\tau\phi_{n-1}^{(c)}R_x\tau\varepsilon) $ on $\mathfrak{H}^1$.

Next, assume that $(\mathrm{D}_n)$ is proven. Using (\ref{14}) and $(\mathrm{D}_n)$, we obtain
$$ \varepsilon\tau \partial_n^{(c)}\tau\varepsilon = \varepsilon\tau R_y^{-1}\partial_n^{(c)}R_y\tau\varepsilon +\varepsilon\tau\phi_{n-1}^{(c)}R_x\tau\varepsilon=(\lambda -1)(-\varepsilon\tau\phi_{n-1}^{(c)}R_x\tau\varepsilon). $$
According to $(\mathrm{B}_n)$, we have the expression
$$ \phi_n^{(c)}=\sum_{i=0}^nf_i^{(c)}R_{z^{n-i}}~\quad~(f_i^{(c)}\in\mathbb{Q}[\partial_1^{(c)},\ldots,\partial_i^{(c)}]_{(i)}). $$
Hence,
$$ \displaystyle \varepsilon\tau\phi_n^{(c)}R_x\tau\varepsilon = \varepsilon \tau\sum_{i=0}^nf_i^{(c)}R_{z^{n-i}} R_x\tau\varepsilon = -\sum_{i=0}^n\varepsilon\tau f_i^{(c)}\tau\varepsilon L_{z_{n+1-i}}. $$
By Lemma \ref{lem4}, this is an element of $\sum_{i=0}^n(\lambda -1)(\mathfrak{W}^{\prime}_i) L_{z_{n+1-i}}$. Then, by Lemma \ref{lem3}, this is a subset of $\mathfrak{W}_{n+1}^{\prime}$. Hence, $(\mathrm{C}_{n+1})$ is proven. 
\end{proof}

\section{Alternative Extension of $\partial_n$}\label{sec6}

\noindent Here, an alternative operator $\widehat{\partial}_n^{(c)}$ is defined instead of $\partial_n^{(c)}$ in Definition \ref{def1}. In this section, several properties of $\widehat{\partial}_n^{(c)}$'s are discussed. In particular, $\partial_n^{(c)}$ and $\widehat{\partial}_n^{(c)}$ give the same class of relations for MZV's. 

\begin{defn}\label{defn3}
{\it Let $c$ be a rational number and $H$ the same operator as in Definition $\ref{def1}$. For each integer $n \ge 1$, the $\mathbb{Q}$-linear map $\widehat{\partial}_n^{(c)}:\mathfrak{H}\to\mathfrak{H}$ is defined by}
$$ \displaystyle \widehat{\partial}_n^{(c)}=\frac{1}{(n-1)!}\mathrm{ad}(\widehat{\theta}^{(c)})^{n-1}(\partial_1) $$
{\it where $\widehat{\theta}^{(c)}$ is the $\mathbb{Q}$-linear map defined by $\widehat{\theta}^{(c)}(x)=\theta (x)$, $\widehat{\theta}^{(c)}(y)=\theta (y)$ and the rule}
\begin{equation}
\widehat{\theta}^{(c)}(ww^{\prime})=\widehat{\theta}^{(c)}(w)w^{\prime}+w\widehat{\theta}^{(c)}(w^{\prime})+cH(w)\partial_1(w^{\prime}) \label{18}
\end{equation}
{\it for any $w,w^{\prime} \in \mathfrak{H}$.}
\end{defn}

The only difference between $\theta^{(c)}$ and $\widehat{\theta}^{(c)}$ is the order of $H$ and $\partial_1$ appearing in the right-hand side of (\ref{19}) and (\ref{18}). 

\begin{lem}\label{lem7}
{\it For any rational number $c$, we have $\widehat{\theta}^{(c)}=\theta^{(-c)}+c\partial_1(H-1)$. }
\end{lem}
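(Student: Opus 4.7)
The plan is to observe that $\widehat{\theta}^{(c)}$ is uniquely determined by its values on the generators $x,y$ together with the non-Leibniz multiplication rule (\ref{18}), iterated over longer words. Hence it is enough to verify that the right-hand side $T:=\theta^{(-c)}+c\partial_1(H-1)$ agrees with $\widehat{\theta}^{(c)}$ on $x$ and $y$, and satisfies the same rule (\ref{18}).

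First I would check the base case. Since $\deg(x)=\deg(y)=1$, we have $(H-1)(x)=(H-1)(y)=0$, so $T(x)=\theta^{(-c)}(x)=\theta(x)$ and likewise $T(y)=\theta(y)$, as required.

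The main computation is to verify the rule (\ref{18}) for $T$. Using that $H$ is a derivation on $\mathfrak{H}$ (because the degree is additive) and that $\partial_1$ is a derivation, a short calculation gives
\begin{equation*}
\partial_1(H-1)(ww') \;=\; \partial_1(H-1)(w)\,w' + w\,\partial_1(H-1)(w') + H(w)\partial_1(w') + \partial_1(w)H(w').
\end{equation*}
Combining this with the rule (\ref{19}) for $\theta^{(-c)}$, namely
\begin{equation*}
\theta^{(-c)}(ww') \;=\; \theta^{(-c)}(w)w' + w\theta^{(-c)}(w') - c\,\partial_1(w)H(w'),
\end{equation*}
the cross term $-c\partial_1(w)H(w')$ from $\theta^{(-c)}$ cancels against $c\partial_1(w)H(w')$ coming from the identity above, leaving precisely $c\,H(w)\partial_1(w')$. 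This is exactly (\ref{18}).

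Neither step is an obstacle; the only potentially misleading point is remembering that $H$ itself is a derivation (so $H-1$ is not) and tracking the sign flip from $c\mapsto -c$ carefully. Once the displayed identity for $\partial_1(H-1)$ is in hand, the lemma is immediate.
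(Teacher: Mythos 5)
Your proof is correct and follows the same route as the paper, whose entire proof is the one-line instruction ``Calculate the recursive rules for both sides''; you have simply carried out that calculation, checking agreement on the generators and verifying that $\theta^{(-c)}+c\partial_1(H-1)$ satisfies the defining rule (\ref{18}). The displayed identity for $\partial_1(H-1)(ww')$ and the resulting cancellation are exactly right.
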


\begin{proof}
Calculate the recursive rules for both sides. 
\end{proof}

\begin{prop}\label{prop3}
{\it For any positive integer $n$ and any rational number $c$, we have $\widehat{\partial}_n^{(c)}\in \mathbb{Q}[\partial_1^{(-c)},\ldots ,\partial_n^{(-c)}]$. }
\end{prop}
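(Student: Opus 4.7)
The plan is induction on $n$. The base case $n=1$ is immediate since $\widehat{\partial}_1^{(c)}=\partial_1=\partial_1^{(-c)}$. For the inductive step I would use the recursive definition $n\widehat{\partial}_{n+1}^{(c)}=[\widehat{\theta}^{(c)},\widehat{\partial}_n^{(c)}]$ together with Lemma \ref{lem7} to split the right-hand side as
$$ n\widehat{\partial}_{n+1}^{(c)} = [\theta^{(-c)},\widehat{\partial}_n^{(c)}] + c[\partial_1(H-1),\widehat{\partial}_n^{(c)}]. $$
By the inductive hypothesis $\widehat{\partial}_n^{(c)}$ is a polynomial in $\partial_1^{(-c)},\ldots,\partial_n^{(-c)}$, and these operators mutually commute by Proposition \ref{comm}, so both commutators can be computed by the Leibniz rule $[A,BC]=[A,B]C+B[A,C]$ applied monomial-by-monomial.

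For the first commutator the calculation reduces to $[\theta^{(-c)},\partial_i^{(-c)}]=i\partial_{i+1}^{(-c)}$, which is immediate from the defining formula $\partial_{i+1}^{(-c)}=\frac{1}{i}\,\mathrm{ad}(\theta^{(-c)})(\partial_i^{(-c)})$; hence $[\theta^{(-c)},\widehat{\partial}_n^{(c)}]\in\mathbb{Q}[\partial_1^{(-c)},\ldots,\partial_{n+1}^{(-c)}]$. For the second commutator I would first observe that $\theta^{(-c)}$ and $\partial_1$ both raise the degree of a word by exactly $1$ (the former by inspecting its recursive rule, noting each of the three summands is homogeneous of degree $\deg(w)+\deg(w')+1$), so that $\partial_i^{(-c)}=\frac{1}{(i-1)!}\,\mathrm{ad}(\theta^{(-c)})^{i-1}(\partial_1)$ is a homogeneous operator of degree $i$, i.e.\ $[H,\partial_i^{(-c)}]=i\,\partial_i^{(-c)}$. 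Combining this with the commutativity $[\partial_1,\partial_i^{(-c)}]=0$ from Proposition \ref{comm} gives
$$ [\partial_1(H-1),\partial_i^{(-c)}] = \partial_1[H,\partial_i^{(-c)}] = i\,\partial_1^{(-c)}\partial_i^{(-c)}, $$
which already lies in $\mathbb{Q}[\partial_1^{(-c)},\ldots,\partial_n^{(-c)}]$. Applying the Leibniz rule for commutators again propagates this identity through a general monomial in the $\partial_j^{(-c)}$'s, yielding $[\partial_1(H-1),\widehat{\partial}_n^{(c)}]\in\mathbb{Q}[\partial_1^{(-c)},\ldots,\partial_n^{(-c)}]$.

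The main obstacle is the degree-counting in the second commutator: one must carefully justify that $\partial_i^{(-c)}$ is weight-$i$ homogeneous on $\mathfrak{H}$, which is proven by induction on $i$ using that $\theta^{(-c)}$ shifts weight by $1$ (and that $\mathrm{ad}(\theta^{(-c)})$ then preserves the homogeneity degree of any operator, raising it by $1$). Once this and the commutativity of the $\partial_j^{(-c)}$'s are in place, both pieces of the decomposition of $n\widehat{\partial}_{n+1}^{(c)}$ lie in $\mathbb{Q}[\partial_1^{(-c)},\ldots,\partial_{n+1}^{(-c)}]$, closing the induction.
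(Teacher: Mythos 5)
Your proof is correct and follows essentially the same route as the paper: induction on $n$, with the inductive step splitting $\widehat{\theta}^{(c)}=\theta^{(-c)}+c\partial_1(H-1)$ via Lemma \ref{lem7} and evaluating the two resulting commutators. You also supply the details the paper leaves implicit (the weight-homogeneity of $\partial_i^{(-c)}$ and the Leibniz expansion over monomials), and your computation in fact gives $[c\partial_1(H-1),\widehat{\partial}_n^{(c)}]=cn\,\partial_1\widehat{\partial}_n^{(c)}$, consistent with Example \ref{ex2}, whereas the paper's displayed coefficient $c(n-1)$ appears to be a typo.
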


\begin{proof}
The proposition holds for $n=1$ because $\widehat{\partial}_1^{(c)}=\partial_1^{(-c)}=\partial_1$. Assume that the proposition is proven for $n$. Using Lemma \ref{lem7}, we obtain
$$ n\widehat{\partial}_{n+1}^{(c)} = [\widehat{\theta}^{(c)},\widehat{\partial}_{n+1}^{(c)}] = [\theta^{(-c)}+c\partial_1(H-1),\widehat{\partial}_n^{(c)}] = [\theta^{(-c)},\widehat{\partial}_n^{(c)}]+c(n-1)\partial_1\widehat{\partial}_n^{(c)}. $$
Hence, by induction, the proposition holds for $n+1$. 
\end{proof}

\begin{exmp}\label{ex2}
The polynomials in Proposition $\ref{prop3}$ can be constructed explicitly. For example, 
\begin{eqnarray*}
\widehat{\partial}_2^{(c)} &=& \partial_2^{(-c)}+c\partial_1^2, \\
\widehat{\partial}_3^{(c)} &=& \partial_3^{(-c)}+2c\partial_1\partial_2^{(-c)}+c^2\partial_1^3, \\
\widehat{\partial}_4^{(c)} &=& \partial_4^{(-c)}+\frac{7}{3}c\partial_1\partial_3^{(-c)}+\frac{2}{3}c{\partial_2^{(-c)}}^2+3c^2\partial_1^2\partial_2^{(-c)}+c^3\partial_1^4.
\end{eqnarray*}
\end{exmp}

\begin{cor}\label{cor3}
{\it For any rational numbers $c, c^{\prime}$, and any positive integers $n, m$, we have $[\partial_n^{(c)},\widehat{\partial}_m^{(c^{\prime})}]=0$. }
\end{cor}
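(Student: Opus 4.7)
The plan is to observe that this corollary is an immediate consequence of the two results just established: Proposition \ref{comm} (commutativity of the $\partial_n^{(c)}$'s across different parameters) and Proposition \ref{prop3} (expression of $\widehat{\partial}_m^{(c')}$ as a polynomial in the $\partial_i^{(-c')}$'s). The idea is that $\partial_n^{(c)}$ commutes with each ``building block'' of $\widehat{\partial}_m^{(c')}$, so it commutes with $\widehat{\partial}_m^{(c')}$ itself.

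Concretely, I would first invoke Proposition \ref{prop3} to write $\widehat{\partial}_m^{(c')} = P(\partial_1^{(-c')}, \ldots, \partial_m^{(-c')})$ for some polynomial $P \in \mathbb{Q}[X_1, \ldots, X_m]$ in commuting indeterminates. Next, by Proposition \ref{comm} applied with the pair of parameters $(c, -c')$, one has $[\partial_n^{(c)}, \partial_i^{(-c')}] = 0$ for every $1 \le i \le m$. A standard induction (using the derivation-like identity $[\partial_n^{(c)}, AB] = [\partial_n^{(c)}, A]\,B + A\,[\partial_n^{(c)}, B]$ for composition of linear endomorphisms) then shows that $\partial_n^{(c)}$ commutes with every monomial in $\partial_1^{(-c')}, \ldots, \partial_m^{(-c')}$, hence by $\mathbb{Q}$-linearity with the polynomial $P$, i.e.\ with $\widehat{\partial}_m^{(c')}$.

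There is essentially no obstacle: the content of the corollary has already been fully absorbed by Propositions \ref{comm} and \ref{prop3}, and what remains is only the formal observation that commutativity is preserved by polynomial expressions. The only point to verify carefully is that Proposition \ref{comm} applies with arbitrary second parameter (in particular with $-c'$), which is precisely its content as stated.
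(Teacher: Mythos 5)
Your proof is correct and follows exactly the paper's route: the paper's own proof of this corollary is the one-line remark that it ``follows immediately from Proposition \ref{comm} and \ref{prop3},'' which is precisely the combination you spell out. Your additional detail (commutation with monomials via the identity $[\partial_n^{(c)},AB]=[\partial_n^{(c)},A]B+A[\partial_n^{(c)},B]$ and then linearity) is the standard argument the paper leaves implicit.
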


\begin{proof}
The proof follows immediately from Proposition \ref{comm} and \ref{prop3}. 
\end{proof}

\begin{lem}\label{lem8}
{\it For any rational number $c$, we have $\widehat{\theta}^{(c)}=\tau \theta^{(-c)}\tau $. }
\end{lem}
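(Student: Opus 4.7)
My plan is to prove the identity by showing that the operator $\widetilde{\theta} := \tau\theta^{(-c)}\tau$ satisfies exactly the same defining data as $\widehat{\theta}^{(c)}$ in Definition \ref{defn3}, and then invoke uniqueness. Recall that both $\theta^{(c)}$ and $\widehat{\theta}^{(c)}$ are completely determined by their values on the generators $x,y$ together with the quasi-Leibniz rule, via induction on word length. So the task reduces to verifying (i) $\widetilde{\theta}(x)=\theta(x)$ and $\widetilde{\theta}(y)=\theta(y)$, and (ii) $\widetilde{\theta}$ satisfies the rule (\ref{18}).

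For step (i), since $\tau$ is an anti-automorphism with $\tau(x)=y$, $\tau(y)=x$ (hence $\tau(z)=z$), one computes $\widetilde{\theta}(x)=\tau\theta(y)=\tau\bigl(\tfrac{1}{2}(yz+zy)\bigr)=\tfrac{1}{2}(zx+xz)=\theta(x)$, and similarly $\widetilde{\theta}(y)=\theta(y)$. For step (ii), I will need two auxiliary identities: $\tau H\tau = H$ (immediate, since $\tau$ preserves degree) and $\tau\partial_1\tau=-\partial_1$. The second is proved by noting that, because $\tau$ is an anti-automorphism, the conjugate of any derivation by $\tau$ is again a derivation, and a direct check on the generators gives $\tau\partial_1\tau(x)=\tau(-xy)=-xy=-\partial_1(x)$ and $\tau\partial_1\tau(y)=\tau(xy)=xy=-\partial_1(y)$.

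Given these, I expand using the defining rule (\ref{19}) for $\theta^{(-c)}$:
\begin{align*}
\widetilde{\theta}(ww')&=\tau\theta^{(-c)}\bigl(\tau(w')\tau(w)\bigr)\\
&=\tau\bigl[\theta^{(-c)}(\tau(w'))\tau(w)+\tau(w')\theta^{(-c)}(\tau(w))-c\,\partial_1(\tau(w'))H(\tau(w))\bigr].
\end{align*}
Applying the anti-multiplicativity of $\tau$ to each term and substituting $\tau H\tau=H$, $\tau\partial_1\tau=-\partial_1$, the three summands become $w\widetilde{\theta}(w')$, $\widetilde{\theta}(w)w'$, and $-c\cdot H(w)\cdot(-\partial_1(w'))=cH(w)\partial_1(w')$, which is precisely the rule (\ref{18}) for $\widehat{\theta}^{(c)}$.

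The one place where care is required is the sign bookkeeping: the sign flip supplied by $\tau\partial_1\tau=-\partial_1$ is exactly what converts the parameter $-c$ into $+c$, and the order reversal produced by $\tau$ is exactly what swaps $\partial_1(w)H(w')$ into $H(w)\partial_1(w')$. I expect this sign/order matching to be the only subtle point; once it is verified, uniqueness of the operator determined by the values on generators and the quasi-Leibniz rule completes the proof by induction on the length of words.
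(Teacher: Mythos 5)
Your proposal is correct and follows essentially the same route as the paper's proof: check agreement on the generators $x,y$, then verify that $\tau\theta^{(-c)}\tau$ satisfies the defining rule (\ref{18}) by expanding $\tau\theta^{(-c)}(\tau(w')\tau(w))$ and using $\tau H\tau=H$, $\tau\partial_1\tau=-\partial_1$. The sign and order bookkeeping you flag is exactly the content of the paper's computation, and your explicit appeal to uniqueness by induction on word length just makes precise what the paper leaves implicit.
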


\begin{proof}
By direct calculations, each image of $x$ and $y$ of $\mathfrak{H}$ coincides. Write $w=w_1w_2$ where $w_1$ and $w_2$ are words of $\mathfrak{H}$ with $\deg (w_i)\ge 1$, $i=1,2$. Then,
\begin{eqnarray*}
\tau \theta^{(-c)} \tau (w) &=& \tau \theta^{(-c)}\bigl(\tau (w_2)\tau (w_1)\bigl) \\
&=& \tau \bigl(\theta^{(-c)}\tau (w_2)\tau (w_1)+\tau (w_2)\theta^{(-c)}\tau (w_1)-c\partial_1\tau (w_2)H\tau (w_1)\bigl) \\
&=& w_1\tau \theta^{(-c)} \tau (w_2)+\tau \theta^{(-c)} \tau (w_1)w_2 -c \tau H\tau (w_1)\tau \partial_1 \tau (w_2).
\end{eqnarray*}
Use $\tau H\tau =H,~\tau \partial_1 \tau =-\partial_1$ to complete the proof. 
\end{proof}

\begin{prop}\label{prop4}
{\it For any positive integer $n$ and any rational number $c$, we have $\widehat{\partial}_n^{(c)}=- \tau \partial_n^{(-c)}\tau $. }
\end{prop}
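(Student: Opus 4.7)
My approach is to reduce the identity to the purely formal fact that conjugation by the involution $\tau$ intertwines the bracket $\mathrm{ad}$, combined with the sign flip $\tau\partial_1\tau=-\partial_1$ that was already recorded in the proof of Lemma \ref{lem8}. The only nontrivial ingredient I will need is Lemma \ref{lem8} itself, which asserts $\widehat{\theta}^{(c)}=\tau\theta^{(-c)}\tau$.

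First I will establish the elementary identity that for any $\mathbb{Q}$-linear endomorphisms $A,B$ of $\mathfrak{H}$, one has
\[
\mathrm{ad}(\tau A\tau)(\tau B\tau)=\tau A\tau\cdot\tau B\tau-\tau B\tau\cdot\tau A\tau=\tau(AB-BA)\tau=\tau\,\mathrm{ad}(A)(B)\,\tau,
\]
where the middle equality uses $\tau^2=\mathrm{id}_{\mathfrak{H}}$. A straightforward induction on $k$ then yields
\[
\mathrm{ad}(\tau A\tau)^{k}(\tau B\tau)=\tau\,\mathrm{ad}(A)^{k}(B)\,\tau\qquad(k\ge 0).
\]

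Next I will rewrite $\partial_1=-\tau\partial_1\tau$, which follows from applying $\tau$ on both sides of $\tau\partial_1\tau=-\partial_1$ (noted in the proof of Lemma \ref{lem8}) and using $\tau^2=\mathrm{id}_{\mathfrak{H}}$ once more. Substituting this expression for $\partial_1$, together with the identity of Lemma \ref{lem8}, into the definition of $\widehat{\partial}_n^{(c)}$ produces
\[
\widehat{\partial}_n^{(c)}=\frac{1}{(n-1)!}\mathrm{ad}(\tau\theta^{(-c)}\tau)^{n-1}(-\tau\partial_1\tau)=-\frac{1}{(n-1)!}\tau\,\mathrm{ad}(\theta^{(-c)})^{n-1}(\partial_1)\,\tau=-\tau\partial_n^{(-c)}\tau,
\]
which is the desired identity.

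I do not anticipate any genuine obstacle in this plan, since essentially all of the work has already been absorbed into Lemma \ref{lem8}. The only points that require care are tracking the single overall minus sign (which is entirely accounted for by $\tau\partial_1\tau=-\partial_1$) and verifying the conjugation formula for iterated adjoints with respect to an involution, both of which are purely formal.
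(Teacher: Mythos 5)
Your proof is correct and follows essentially the same route as the paper: both rest entirely on Lemma \ref{lem8} together with $\tau\partial_1\tau=-\partial_1$ and $\tau^2=\mathrm{id}$, the paper phrasing the argument as an induction on $n$ via $n\widehat{\partial}_{n+1}^{(c)}=[\widehat{\theta}^{(c)},\widehat{\partial}_n^{(c)}]$, while you package the same induction into the general conjugation identity $\mathrm{ad}(\tau A\tau)^{k}(\tau B\tau)=\tau\,\mathrm{ad}(A)^{k}(B)\,\tau$. This is a purely cosmetic difference; no gap.
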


\begin{proof}
The proof is given by induction on $n$. The proposition holds for $n=1$. Assume that the proposition is proven for $n$. Using Lemma \ref{lem8}, we have
$$ (n+1)\widehat{\partial}_{n+1}^{(c)} = [\widehat{\theta}^{(c)},\widehat{\partial}_n^{(c)}] = -[\tau \theta^{(-c)} \tau, \tau \partial_n^{(-c)} \tau] = -\tau [\theta^{(-c)},\partial_n^{(-c)}]\tau = -n \tau \partial_{n+1}^{(-c)}\tau . $$
Thus, the proposition holds for $n+1$.
\end{proof}

By Proposition \ref{prop4}, we have $\widehat{\partial}_n^{(c)}(\mathfrak{H}^0)\subset\mathrm{ker}\mathit{Z}$, which assigns the same class to Theorem because of Proposition \ref{prop3}.

\section*{Appendix 1: A New Proof of Derivation Relation}\label{sec5}

\noindent In the case of $c=0$ in Theorem \ref{mthm}, we have an alternative proof of the derivation relation for MZV's, reducing to Kawashima's relation. Here, the automorphisms on $\widehat{\mathfrak{H}}$, the completion of $\mathfrak{H}$, are introduced. (See \cite{IKZ} for details.) Let $\Phi$ be the automorphism on $\widehat{\mathfrak{H}}$ defined by $\Phi(x)=x$ and $\Phi(z)=z(1+y)^{-1}$. The automorphism $\Phi$ satisfies
$$ \displaystyle \frac{1}{1+y}\ast w=\frac{1}{1+y}\Phi(w) $$
for $w\in\mathfrak{H}^1$ (\cite[Proposition 6]{IKZ}). Let $\Delta$ be $\exp(\sum_{n\ge 1}\frac{\partial_n}{n})$ which is the automorphism on $\widehat{\mathfrak{H}}$ characterized by $\Delta(x)=x(1-y)^{-1}$ and $\Delta(z)=z$. Then, we have $\Phi=\varepsilon\Delta\varepsilon $ on $\widehat{\mathfrak{H}}$. This implies that $\mathcal{H}_{\frac{1}{1+y}}=\varepsilon L_x^{-1}\Delta L_x\varepsilon $ on $\widehat{\mathfrak{H}}^1$, the completion of $\mathfrak{H}^1$. Hence, $(\Delta -1)(\mathfrak{H}^0)\subset L_x\varepsilon(\mathfrak{H}y\ast\mathfrak{H}y)$. Expanding the exponential map, each degree $i$ part of $\Delta -1$ sends $\mathfrak{H}^0$ to $L_x\varepsilon(\mathfrak{H}y\ast\mathfrak{H}y)$, and, therefore, the derivation relation is a class of relations of MZV's according to Kawashima's relation in Fact \ref{thm2}.

\section*{Appendix 2: Ohno's relation and Derivation Relation}

For $n\ge 1$, the derivation $D_n$ on $\mathfrak{H}$ is defined by $D_n(x)=0$, $D_n(y)=x^ny$. The map $\bar{D}_n=\tau D_n\tau$ is another derivation on $\mathfrak{H}$ such that $\bar{D}_n(x)=xy^n$, $\bar{D}_n(y)=0$. Set
$$ \displaystyle \sigma=\sum_{l=0}^{\infty}\sigma_l=\exp\biggl(\sum_{n=1}^{\infty}\frac{D_n}{n}\biggr),~\bar{\sigma}=\sum_{l=0}^{\infty}\bar{\sigma}_l=\exp\biggl(\sum_{n=1}^{\infty}\frac{\bar{D}_n}{n}\biggr). $$
The maps $\sigma$, $\bar{\sigma}$ are automorphisms on $\mathfrak{H}$. Putting $D=\sum_{n=1}^{\infty}\frac{D_n}{n}$, we find $D^m(x)=0$, $D^m(y)=(-\log(1-x))^my$ for $m\ge 1$, and hence, 
$$ \sigma(x)=x,~\sigma(y)=\frac{1}{1-x}y. $$
Since the map $\sigma$ is an automorphism, 
\begin{eqnarray*}
\displaystyle \sigma(x^{k_1-1}y\cdots x^{k_n-1}y) &=& x^{k_1-1}\frac{1}{1-x}y\cdots x^{k_n-1}\frac{1}{1-x}y \\
\displaystyle &=& \sum_{l=0}^{\infty}\sum_{\begin{subarray}{c}e_1+\cdots +e_n=l, \\ e_1,\ldots,e_n\ge 0 \end{subarray}}x^{k_1+e_1-1}y\cdots x^{k_n+e_n-1}y,
\end{eqnarray*}
and hence, 
$$ \displaystyle \sigma_l(x^{k_1-1}y\cdots x^{k_n-1}y)=\sum_{\begin{subarray}{c}e_1+\cdots +e_n=l, \\ e_1,\ldots,e_n\ge 0 \end{subarray}}x^{k_1+e_1-1}y\cdots x^{k_n+e_n-1}y. $$
Thus, Ohno's relation can be stated as $\sigma_l(1-\tau)(\mathfrak{H}^0)\subset\mathrm{ker}\mathit{Z}$ for any $l\ge 0$. If $l=0$, Ohno's relation is reduced to the duality formula. 

The automorphisms $\sigma$, $\bar{\sigma}$ and $\Delta$, which has been defined in Appendix 1, have a property as follows. (See \cite[Theorem 4,(ii)]{IKZ}.) 
\begin{prop}\label{appen}
$\Delta=\bar{\sigma}\sigma^{-1}$. 
\end{prop}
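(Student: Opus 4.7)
The plan is to verify $\Delta=\bar{\sigma}\sigma^{-1}$ directly on generators, since all three maps are continuous automorphisms of $\widehat{\mathfrak{H}}$ and hence are determined by their images on $x$ and $y$. Once the generator identities are established, the full equality follows by multiplicativity and continuity in the graded completion.

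First I would collect the explicit formulas. From $\sigma(x)=x$ and $\sigma(y)=(1-x)^{-1}y$ (which are computed in the preceding paragraph) one inverts in $\widehat{\mathfrak{H}}$ to get $\sigma^{-1}(x)=x$ and $\sigma^{-1}(y)=(1-x)y$. The analogous computation for $\bar{\sigma}$, using $\bar{D}_n=\tau D_n\tau$ (so that $\bar{\sigma}=\tau\sigma\tau$), gives $\bar{\sigma}(x)=x(1-y)^{-1}$ and $\bar{\sigma}(y)=y$. The defining relations $\Delta(x)=x(1-y)^{-1}$ and $\Delta(z)=z$ then yield $\Delta(y)=z-\Delta(x)=x+y-x(1-y)^{-1}$.

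Next I would verify the two generator identities. For the image of $x$ the check is immediate: $\bar{\sigma}\sigma^{-1}(x)=\bar{\sigma}(x)=x(1-y)^{-1}=\Delta(x)$. For the image of $y$ I compute
$$\bar{\sigma}\sigma^{-1}(y)=\bar{\sigma}\bigl((1-x)y\bigr)=\bigl(1-x(1-y)^{-1}\bigr)y=y-xy(1-y)^{-1},$$
using that $y$ commutes with $(1-y)^{-1}$ in $\widehat{\mathfrak{H}}$. Matching this against $\Delta(y)=x+y-x(1-y)^{-1}$ reduces to the elementary formal identity $(1-y)^{-1}-1=y(1-y)^{-1}$, which is immediate.

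No step presents a genuine obstacle; the only points requiring care are that all manipulations take place in the completion $\widehat{\mathfrak{H}}$, where the series $(1-x)^{-1}$ and $(1-y)^{-1}$ make sense, and that the automorphisms $\sigma$, $\bar{\sigma}$, $\Delta$ extend continuously to $\widehat{\mathfrak{H}}$ as algebra homomorphisms, so the substitutions above are legitimate. A conceptually cleaner alternative would be to try to compare exponentials of derivations, but $D_n$ and $\bar{D}_n$ do not commute and $\partial\ne\bar{D}-D$ in a useful sense, so the direct check on generators is the cleanest route.
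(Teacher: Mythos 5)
Your verification is correct. The identities $\sigma^{-1}(x)=x$, $\sigma^{-1}(y)=(1-x)y$, $\bar{\sigma}(x)=x(1-y)^{-1}$, $\bar{\sigma}(y)=y$ all check out, and the computation $\bar{\sigma}\sigma^{-1}(y)=y-xy(1-y)^{-1}=x+y-x(1-y)^{-1}=\Delta(y)$ is a valid formal manipulation in $\widehat{\mathfrak{H}}$, so the two continuous automorphisms agree on generators and hence everywhere. Note, however, that the paper offers no proof of this proposition at all: it simply cites \cite[Theorem 4, (ii)]{IKZ}. So your argument is not a variant of the paper's proof but a short self-contained replacement for the citation, and as such it is a genuine improvement in readability. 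The one external input you still rely on is the characterization of $\Delta=\exp(\sum_{n\ge 1}\partial_n/n)$ by $\Delta(x)=x(1-y)^{-1}$ and $\Delta(z)=z$, which the paper itself asserts in Appendix 1 (again on the authority of \cite{IKZ}); if one wanted a proof from the definitions alone, that characterization would also need to be verified (by the same kind of computation you sketch for $\bar{\sigma}$, namely $\partial^m(x)$ for $\partial=\sum_n\partial_n/n$), but within the paper's stated setup your argument is complete.
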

According to this proposition, we have $\sigma-\bar{\sigma}=(1-\Delta)\sigma$. Since $\bar{\sigma}_l=\tau\sigma_l\tau$ and the duality formula is included in Ohno's relation, this identity implies that Ohno's relation is equivalent to the union of the duality formula and the derivation relation.



\end{document}